\newcommand{\Hm}[1]{\leavevmode{\marginpar{\tiny%
			$\hbox to 0mm{\hspace*{0.5cm}$\leftarrow$\hss}%
			\vcenter{\vrule depth 0.1mm height 0.1mm width \the\marginparwidth}%
			\hbox to 0mm{\hss$\rightarrow$\hspace*{-0.5mm}}$\\\relax\raggedright
			#1}}}
\crefname{assumption}{Assumption}{Assumption}
\crefname{fact}{Fact}{Facts}
\title{On the convergence of iterative regularization method assisted by the graph Laplacian with early stopping \thanks{Submitted to the editors DATE.
\funding{The work of HB and AKG was funded by the ANRF through grant CRG/2022/00549.}}}
\author{
Harshit Bajpai\thanks{Department of Mathematics, Indian Institute of Technology Roorkee, Roorkee, 247667, India 
  (\email{harshit\_b@ma.iitr.ac.in}, \email{ankik.giri@ma.iitr.ac.in}).}
\and
Gaurav Mittal\thanks{Defence Research and Development Organization, Near Metcalfe House, Delhi, 110054,
India 
  (\email{gaurav.mittaltwins@yahoo.com}).}
\and
Ankik Kumar Giri\footnotemark[2]
}
\begin{document}

\maketitle
\begin{abstract}
We present a data-assisted iterative regularization method for solving ill-posed inverse problems. The proposed approach, termed \texttt{IRMGL+\(\Psi\)}, integrates classical iterative techniques with a data-driven regularization term realized through an iteratively updated graph Laplacian. Our method commences by computing a preliminary solution using any suitable reconstruction method, which then serves as the basis for constructing the initial graph Laplacian. The solution is subsequently refined through an iterative process, where the graph Laplacian is simultaneously recalibrated at each step to effectively capture the evolving structure of the solution. A key innovation of this work lies in the formulation of this iterative scheme and the rigorous justification of the classical discrepancy principle as a reliable early stopping criterion specifically tailored to the proposed method. Under standard assumptions, we establish stability and convergence results for the scheme when the discrepancy principle is applied. Furthermore, we demonstrate the robustness and effectiveness of our method through numerical experiments utilizing four distinct initial reconstructors $\Psi$: the adjoint operator (Adj), filtered back projection (FBP), total variation (TV) denoising, and standard Tikhonov regularization (Tik). It is observed that \texttt{IRMGL+Adj} demonstrates a distinct advantage over the other initializers, producing a robust and stable approximate solution directly from a basic initial reconstruction.

\end{abstract}

\begin{keywords}
Iterative regularization, Graph Laplacian operator, Data-assisted regularization, Ill-posed problems, Discrepancy principle, Medical imaging
\end{keywords}

\begin{MSCcodes}
47A52, 65F22, 05C90, 65J20
\end{MSCcodes}

\section{Introduction}
This work is concerned with the analysis and numerical solution of a linear ill-posed inverse problem, modeled by the operator equation  
\begin{equation}\label{Model eqn}
    Au = v,
\end{equation}  
where \( A: \mathcal{D}(A) \subset \mathcal{U}\simeq \mathbb{R}^n \to \mathcal{V} \simeq \mathbb{R}^m \) is a discretized version of a linear operator between \( \mathcal{U} \) and \( \mathcal{V} \), and \( \mathcal{D}(A) \) denotes the domain of \( A \). Unless otherwise specified,  we assume that both spaces are equipped with the standard inner product \( \langle \cdot, \cdot \rangle \) and corresponding norm \( \|\cdot\| \).
 Given a fixed $u^\dagger$ and the corresponding exact data $v:= Au^\dagger$ our goal is to recover a good approximation of  $u^\dagger$ from noisy observations $v^\delta$ to $v$ (unknown in practical applications) which satisfies 
\begin{equation}\label{eqn: delta}
    \|v^\delta - v\| \leq \delta
\end{equation}
with the known noise level $\delta >0.$ In practical applications, (\ref{Model eqn}) is typically ill-posed, meaning that even small perturbations in the data can lead to disproportionately large deviations in the resulting solution. Therefore, regularization techniques are essential to ensure the stability and reliability of the approximate solution.
Many of these approaches can be classified as Tikhonov-type variational methods, which are typically formulated as
\begin{equation}\label{eqn: variational}
u_\alpha^\delta := \arg\min_{u \in  \mathcal{U}} \{ \mathfrak{D}(Au ; v^\delta) + \alpha \mathfrak{R}(u) \},
\end{equation}
where \(u \in \mathcal D (A), \mathfrak{D}(\cdot\ ; \cdot)\) denotes a pseudo-distance measuring the discrepancy between the forward model \(Au\) and the observed (possibly noisy) data \(v^\delta\). The term \(\mathfrak{R}(u)\) serves as a regularization functional, and the regularization parameter \(\alpha > 0\) governs the trade-off between the influence of the data misfit and regularization. 
A common choice for these terms is
\begin{equation*}
\mathfrak{D}(Au; v^\delta) = \frac{1}{2} \|Au - v^\delta\|_2^2, \quad \mathfrak{R}(u) = \|u\|_1,
\end{equation*}
where the \(\ell_2\)-norm ensures a least-squares fit to the data, and the \(\ell_1\)-norm promotes sparsity in the solution. For further details, we refer the reader to \cite{engl1996regularization} and \cite{scherzer2009variational}.

\noindent
The choice of the regularization functional $\mathfrak{R}$ plays a pivotal role in the success of the reconstruction. When prior knowledge about the true solution is available, $\mathfrak{R}$ can be specifically designed to incorporate this information, thereby steering the regularization process toward a restricted class of solutions that exhibit the desired features or structures. Few prominent examples of regularization functionals are as follows.
\begin{itemize}
    \item \(\mathfrak{R}(u) = \|u - u^{(0)}\|_2^2\), where $\|\cdot\|_2$ is the Euclidean norm. This choice is particularly effective when the prior information \(u^{(0)}\) about the true solution is available. For further details, see \cite{bajpai2024hanke,mittal2025convergence,scherzer1998modified}.
    \item \(\mathfrak{R}(u) = \|Lu\|_q^q\), where $q \geq 1$, \(\|\cdot\|_q\) is the Euclidean \(q\)-norm and  \(L\) denotes a linear differential operator. This choice is especially valuable in imaging applications, as differential operators are capable of capturing sharp variations such as edges or intensity discontinuities. See \cite{hansen2006deblurring, jain1989fundamentals, ng1999fast} for the references.
    \item  \(\mathfrak{R}(u) = \|Mu - v^\delta\|_2^2\), where \(M\) is a data-driven operator constructed from a prior dataset \(\{(u^{(i)}, Au^{(i)})\}_{1 \leq i \leq l}\), such that \(Mu^{(i)} := Au^{(i)}\) for each \(i\). This approach is particularly useful in scenarios where the underlying physical or technical model is not fully understood, as it leverages empirical data to inform the regularization process. See \cite{arridge2019solving,aspri2020data,aspri2020data1, bajpai2025stochastic} for the references.
\end{itemize}
Beyond these methods, considerable research has focused on learning-based regularization techniques. Several approaches incorporate neural networks directly into the regularizer, for example, networks trained for denoising~\cite{Meinhardt2017,Romano2017} or artifact removal~\cite{Gonzalez2019,Li2020,Obmann2020}, thereby promoting reconstructions invariant under the learned transformations. More recently, adversarial regularization~\cite{Lunz2018} employs discriminative networks to distinguish between artifact-free and artifact-corrupted images. A comprehensive overview of deep learning--based regularization frameworks for inverse problems is provided in~\cite{arridge2019solving}.

Digital images, formed by pixels organized on a two-dimensional grid, naturally lend themselves to a graph-based representation. In light of this, graph-based differential operators \(\Delta\) have emerged in recent years as effective substitutes for traditional Euclidean differential operators \(L\), particularly in image processing, image denoising and computed tomography (CT) problems, see \cite{bianchi2021graph, bianchi2023graph,bianchi2024improved, bianchi2025data, buccini2021graph, gilboa2007nonlocal,gilboa2009nonlocal,lou2010image, peyre2011non,zhang2010bregmanized}. Graph-based operators have demonstrated consistently strong performance, largely due to their ability to more accurately capture the complex structures and textures inherent in image data. In contrast to traditional Euclidean differential operators, which rely primarily on spatial proximity, graph-based methods incorporate both spatial and intensity similarities between pixels.  This enables a more nuanced representation of image content and facilitates richer information extraction. For a detailed discussion on graph-based operators, the reader is referred to Section~\ref{sec:prem}.

\noindent
A critical aspect in graph-based regularization methods is the construction of the graph from a signal that effectively captures the key features of the ground truth $u^\dagger$. As highlighted in \cite{lou2010image}, directly constructing the graph $S$ from the observed, noisy data $v^\delta$ often leads to suboptimal performance in imaging tasks such as tomographic reconstruction and deblurring. This shortcoming arises because $v^\delta$ resides in a different domain than $u^\dagger$. 
To mitigate this issue, the authors proposed a preprocessing step that transforms $v^\delta$ into a more suitable representation $\Psi(v^\delta)$, from which the graph is then constructed. The mapping $\Psi : \mathcal{V} \rightarrow \mathcal{U}$ serves as a reconstruction operator that lifts the observations from  $\mathcal{V}$ into $\mathcal{U}$, where the ground truth signal lives. This transformation can be realized using standard techniques such as Tikhonov filtering \cite{engl1996regularization} or the filtered backprojection (FBP) method \cite{kak2001principles}, depending on the specific inverse problem.
In a similar vein, and independently, iterative schemes that adaptively update the graph weights during the reconstruction process were proposed in \cite{arias2009variational,bianchi2024improved,peyre2008non, peyre2011non, zhang2010bregmanized}.

Despite providing a coherent and principled optimization framework for solving inverse problem (\ref{Model eqn}), variational regularization methods of the form (\ref{eqn: variational}) have a significant number of drawbacks in comparison to iterative methods. The selection of an appropriate regularization parameter \(\alpha\) remains a critical and often challenging task in variational methods, with poor choices leading to suboptimal reconstructions. Furthermore, these methods tend to be computationally intensive, particularly in high-dimensional settings. In contrast, iterative regularization methods inherently offer greater flexibility through adaptive regularization via early stopping rules. They are typically easier to implement, more scalable, and naturally suited for incorporating data-driven components.

\noindent To effectively address these challenges, we propose a novel iterative scheme, referred to as the \textit{Iterative Regularization Method assisted by the Graph Laplacian} (\texttt{IRMGL+\(\Psi\)}, see also Fig. \ref{fig:abstract illustration}), which performs the following iterative updates
\begin{equation}\label{main iterative schrme}
\begin{cases}
     u_{k+1}^\delta = u_k^\delta - \alpha_k^\delta A^*(Au_k^\delta - v^\delta) - \beta_k^\delta \Delta_{u_k^\delta}u_k^\delta, \quad k \in \mathbb{N},\\
     u_0^\delta \hspace{3.8mm} = \Psi(v^\delta),     
\end{cases}
\end{equation}
where $\Psi: \mathcal{V} \to \mathcal{U}$ is the initial reconstructor,  \(A^*\) denotes the adjoint of \(A\), \(\Delta_u\) is a graph Laplacian build from \(u\) (see Subsection \ref{subsec: Images and Graphs} for a proper definition) and  \(\alpha_k^\delta > 0\), \(\beta_k^\delta \geq 0\) are suitably chosen step sizes and weighted parameters, respectively. The detailed discussion on the method can be found in Subsection \ref{subsec: The method}. Note that, this method is obtained from a gradient descent applied on a functional 
\[J(u):= \frac{1}{2} \left(\|Au - v^\delta\|^2 + \beta \langle u, \Delta_u u\rangle \right).\]
where $\beta \geq 0$ is a constant. It is crucial to highlight that the proposed  scheme  (\ref{main iterative schrme}) serves as the iterative counterpart to the existing \texttt{graphLa+$\Psi$}~\cite{bianchi2025data} and \texttt{it-graphLa \(\Psi\)}~\cite{bianchi2024improved} frameworks. 

\noindent Both \texttt{graphLa+$\Psi$} (which is based on a variational framework) and \texttt{it-graphLa\(\Psi\)} (developed for acoustic impedance inversion by incorporating an iterated graph Laplacian into a variational regularization framework) require solving a minimization problem at each step. While \cite{bianchi2024improved} provided numerical evidence for their approach, a rigorous mathematical justification was not established. In contrast, our proposed \texttt{IRMGL+\(\Psi\)} method presents a distinct advancement: it offers an inherently iterative and robust solution by circumventing the need to solve computationally intensive minimization problems, a characteristic of the aforementioned works \cite{bianchi2024improved, bianchi2025data}. Crucially, this work also addresses a critical theoretical void by furnishing comprehensive convergence and stability proofs for our scheme. The details on the implementation of these two methods are provided in the \Cref{appendix} to enable a comparative visualization with our  scheme.
Moreover, \texttt{IRMGL+\(\Psi\)} can also be interpreted as a generalization of the classical Landweber method \cite{hanke1995convergence}, where the extension arises from the incorporation of a data-driven term induced by the graph Laplacian.

\noindent Due to the ill-posedness of (\ref{Model eqn}), the iterative process must not be carried out indefinitely. Instead, it requires a suitable termination strategy, a concept widely known as \emph{early stopping} in the field of  inverse problems and machine learning~\cite{engl1996regularization}. It is now widely recognized that early stopping is a critical aspect that must be carefully addressed in the design of image reconstruction models \cite{barbano2023image, jahn2024early,wangearly}.

\noindent To serve the purpose, we employ the discrepancy principle~\cite{morozov1966solution}, which suggests that the iteration should be terminated once the residual norm becomes comparable to the noise level \( \delta \).  Hence the stopping index \(k_\delta = k(\delta, v^\delta) \) determined by the discrepancy principle is defined as
\begin{equation}\label{eqn: discrepancy}
  k_\delta := \min \left\{ k \geq 0 : \left\| Au_k^\delta- v^\delta \right\| \leq \tau \delta \right\}, 
\end{equation}
where \( \tau > 1 \) is a known constant. This principle is applicable whenever concrete value of \( \delta \) (noise level) is available, which can often be inferred directly from the data. The above equation outputs \(k_\delta\), so we use \(u_{k_\delta}^\delta\) as an approximate solution of (\ref{Model eqn}). We investigate the convergence behavior of the \texttt{IRMGL+\(\Psi\)} method under the discrepancy principle~\eqref{eqn: discrepancy} and we establish that the discrepancy principle guarantees termination of the iteration after a finite number of steps. Furthermore, we introduce a systematic criterion for choosing the parameters $\alpha_k^\delta$ and $\beta_k^\delta$. The proposed parameter selection strategy is adaptive in nature, utilizing only information that is intrinsically available during the iterative process.

\noindent It is worth noting that the data-driven term in \texttt{IRMGL+\(\Psi\)} method (\ref{main iterative schrme}) inherently depends on both the noise level~$\delta$ and the observed data~$v^\delta$. This dual dependence introduces additional complexity into the analysis, rendering it highly nonstandard from existing iterative regularization methods \cite{kaltenbacher2008iterative}.
\begin{figure}
    \centering
\begin{tikzpicture}[>=Stealth, node distance=2cm]\label{abstract illustration}

\draw[thick] (0,0) ellipse (1.5cm and 2.5cm);
\draw[thick] (6,0) ellipse (1.5cm and 2.5cm);
\draw[fill=gray!30, thick] (-0.2,-1.8) circle (0.5cm);


\node[circle, fill=black, inner sep=1pt, label=left:$\Psi(v^\delta)$] (psi delta) at (-0.1,1.5) {};
 \node[circle, fill=black, inner sep=1pt, label=left:$u_1^\delta$] (u1) at (0.2,0.9) {};
 \node[circle, fill=black, inner sep=1pt, label=left:$u_2^\delta$] (u2) at (0.2,0.1) {};
  \node[circle, fill=black, inner sep=1pt, label=right:$u_{k_\delta}^\delta$] (uk delta) at (0.2,-1.0) {};
 \node[circle, fill=black, inner sep=1pt, label=right:$\Bar{u}$] (bar u) at (-0.9,-1.2) {};   
 
\node[circle, fill=black, inner sep=1pt, label=left:$u$] (u) at (-0.23,-2) {};

\node[circle, fill=black, inner sep=1pt, label=right:$v^\delta$] (vdelta) at (5.5,1.5) {};
\node[circle, fill=black, inner sep=1pt, label=right:$v$] (v) at (5.9,0.2) {};
 \node (mid) at (-1.0,0.5) {};
\draw[->] (vdelta) to node[midway, above] { $\hspace{2mm} \Psi$ } (psi delta) ;
\draw[->, thick, bend right=40] (u) to node[midway, above] { $\hspace{2mm} A$ } (v);
\draw[->] (v) to[bend right=20] node[midway, above] { $\hspace{2mm} \delta$ } (vdelta);
\draw[dotted, bend left=40, ->] (psi delta) to (u1) ;
\draw[dotted, bend left=40, ->] (u1) to (u2) ;
\draw[densely dotted, ->] (u2) to (uk delta) ;
\draw[decorate, decoration={coil, aspect=0, amplitude=2mm, segment length=4mm}, ->] (uk delta) to node[midway, right] { \tiny $\hspace{2mm}\delta \to 0$ } (u);
\draw[decorate, decoration={zigzag, amplitude=2mm, segment length=8mm}, ->]
    (psi delta) .. controls(mid) .. (bar u);
     \node[align=center, right=1.2cm of u2] {\texttt{IRMGL+\(\Psi\)}};
    \draw[decorate, decoration={zigzag, amplitude=2mm, segment length=8mm}, ->]
(psi delta) .. controls(mid) .. node[midway, above] {\tiny $ \hspace{-0.4cm} \delta \to 0$} (bar u); 

\node at (0,-3) {$\mathcal{U}$};
\node at (6, -3) {$\mathcal{V}$};

\end{tikzpicture}
\caption{An abstract illustration of the \texttt{IRMGL+\(\Psi\)} method. The initial reconstructor $\Psi$ does not necessarily serve as a regularization method, as indicated by the piecewise linear trajectory of $\Psi(v^\delta)$ as $\delta \to 0$. However, when combined with \texttt{IRMGL+\(\Psi\)}, the resulting process constitutes a stable and convergent regularization method. The dotted trajectory represents the iterative progression of the method, while the approximate solution at iteration $k_\delta$ is denoted by $u_{k_\delta}^\delta$. The coiled path illustrates the regularization behavior of the method, highlighting the convergence $u_{k_\delta}^\delta \to u$ as $\delta \to 0$. Full mathematical justification for the convergence is given in subsequent sections. The difference between \texttt{IRMGL+\(\Psi\)} and \texttt{GraphLa+$\Psi$} can be noted using  \cite[Fig. 1.1]{bianchi2025data}.
}
    \label{fig:abstract illustration}
\end{figure}
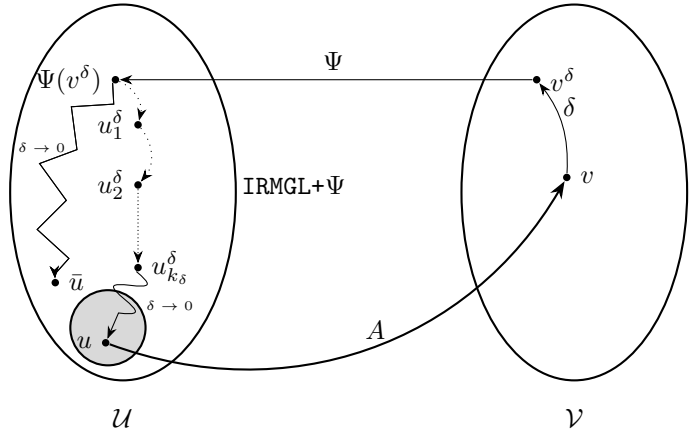

\noindent The paper is structured as follows.  Section~\ref{sec:prem} introduces the necessary notation and provides preliminaries related to graphs and image representations. In Section~\ref{sec:IRMGP}, we describe the proposed method in detail, including an algorithmic formulation, and formulate the conditions under which convergence is ensured. We also prove the stability and convergence of the method within this section. Section~\ref{sec: numerical} is dedicated to demonstrating the effectiveness of the proposed approach through a series of numerical experiments, highlighting its advantages over variational-type methods assisted by graph Laplacian regularization. Finally, we summarize our contributions and discuss potential directions for future research in Section \ref{sec:conclusions}.


\section{Preliminaries}
\label{sec:prem}
In this section, we present foundational definitions and results that are related to this work. We begin in subsection \ref{subsec: Graph Theory} by outlining essential concepts from graph theory. One may refer to  \cite{keller2021graphs} for more details.

\subsection{Graph theory}\label{subsec: Graph Theory}
\begin{definition}
For a finite set $S$, we define a graph over $S$ as a pair $G=(S, w)$, where $w: S\times S \to [0, \infty) $ is called edge-weight function if for every $a, b \in S$, it satisfies
\begin{itemize}
    \item Symmetry: $w(a, b)= w(b,a),$
    \item No self-loops: $w(a, a) =0.$
\end{itemize}
\end{definition}
The set of edges is given by $T: = \{(a, b) \in S\times S \hspace{1mm} | \hspace{1mm} w(a, b) \neq 0\}$. Two nodes $a$ and $b$ are said to be connected if $w(a, b) >0$. In such cases, we denote this relation by $a \sim b.$  This weight \( w(a, b) \) measures how strongly nodes \( a \) and \( b \) are connected.
\begin{definition}[Graph Laplacian] For any function $\textbf{x}: S 
\to \mathbb{R},$ the graph Laplacian $\Delta \textbf{x} : S \to \mathbb{R}$ associated to the graph $G = (S, w)$ is defined by the action
\begin{equation}\label{Eqn: Graph Laplacian}
\Delta \textbf{x} (a):= \sum_{a \sim b} w(a, b)(\textbf{x}(a) - \textbf{x}(b)).
\end{equation}
\end{definition}
    The corresponding graph Laplacian matrix for the operator $\Delta$ is given by 
    \begin{equation}\label{Delta = D-W}
        \Delta := D - W,
    \end{equation}
  where $W  = \begin{bmatrix}
        w(a_i, b_j)
    \end{bmatrix}_{\{i, j \in |S|\}}$ is the \emph{weight matrix} and $D$ is the \emph{diagonal degree matrix} defined as  
\[
    D = \operatorname{diag}\bigl(d(a_1),\dots,d(a_{|S|})\bigr), 
    \qquad d(a) := \sum_{\substack{b \in S \\ (a,b) \in T}} w(a,b).
\]
    Here $|S|$ denotes the cardinality of the set $S$. It can be noted that $\Delta$ defined in (\ref{Delta = D-W}) applied on $\textbf{x}(a)$ will give the same action as in (\ref{Eqn: Graph Laplacian}). 
\subsection{Images and graphs}\label{subsec: Images and Graphs}
This subsection goes over the fundamental ideas needed to create a graph from an image. Notably, defining a graph necessitate an edge-weight function $w$ and a set of nodes $S$.

\noindent The union of multiple pixels \(a \in S\) arranged on a grid forms an image. It is therefore natural to represent each pixel as an ordered pair
\[
    a = (i_a, j_a),
\]

where \(i_a = 1, \ldots, p\) and \(j_a = 1, \ldots, q\).  
In this context, \(p\) and \(q\) denote the total number of pixels along the horizontal and vertical directions, respectively.  
The index \(i_a\) specifies the horizontal position of the pixel within the grid,  
while \(j_a\) specifies the vertical position. 
Thus, the node can be configured as 
 \begin{equation*}
     S=\{a \hspace{1mm} | \hspace{1mm} a=(i_a, j_a), i_a=1, \ldots, p, \; j_a = 1, \ldots , q\}.
 \end{equation*}
 
\noindent For simplicity, we consider a grayscale image, which is characterized by the intensity values of its pixels. Such an image can be represented as a function that assigns an intensity value to each pixel location, i.e.,
\begin{equation*}
    \textbf{x} : S \to [0, 1],
\end{equation*}
where $0$ corresponds to black, $1$ corresponds to white, and intermediate values represent varying shades of gray. A widely used method for connecting two pixels by $w$ based on their spatial proximity and light intensity is provided by
\begin{equation}\label{eqn: edge weight fun}
    w_{\textbf{x}}(a, b) = \underbrace{g(a, b)}_{geometry}\times\underbrace{h_{\textbf{x}}(a, b)}_{\textbf{x}- intensity},
\end{equation}
where $g(a, b)$ denotes an edge weight function determined by the geometric characteristic of $S.$ A commonly used form of $g$ is 
\begin{equation*}\label{g(a,b)}
   g(a, b) = \mathbf{1}_{(0, R]}(\eth (a, b)) 
\end{equation*}
where \( \mathbf{1}_{(0, R]} \) denotes the indicator function,  \( R>0 \) is a control parameter specifying the maximum allowable distance for two pixels to be considered neighbors, and \( \eth(a, b) \) represents the distance between pixels \( a \) and \( b \).
 The common choices are $\eth(a, b)= |i_a - i_b| + |j_a - j_b|$ or $\eth(a, b) = \max \{|i_a - i_b|, |j_a - j_b|\}$. The function $h_{\textbf{x}}(a, b)$ serves to weight the connection between two pixels based on their intensity values and for that, in general, Gaussian kernel function is used. That is 
\begin{equation*}
    h_{\textbf{x}}(a, b) := \exp \left (\frac{-|\textbf{x}(a) - \textbf{x}(b)|^2}{\sigma} \right),
\end{equation*}
where $\sigma>0$ is the control parameter. We refer to \cite{bronstein2017geometric, calatroni2017graph,gilboa2009nonlocal, lou2010image} for a thorough description of how to define the weights of the graph Laplacian using such a Gaussian kernel function. A similar construction of a graph induced by a grayscale image is given in \cite{bianchi2024improved, bianchi2025data}.
Assume that $g(a,b) = \mathbf{1}_{(0,R]}(\eth(a,b))$ has a finite interaction radius $R>0$, then each node $a\in S$ is connected to at most $N$ neighbours, that is,
\begin{equation}\label{eqn:degree}
    \deg(a) := \#\{b \in S : g(a,b) \neq 0\} \le N \qquad \text{for all } a \in S,
\end{equation}
where $ \deg(a)$ denotes the degree of the graph node and $N$ is independent of the graph size $|S|$. 
In the next example, we illustrate the basic concepts of graph construction and the corresponding graph Laplacian using a minimal \(2 \times 2\) image.  
This example allows visualization of the graph structure, highlights the effect of parameters such as the neighborhood radius \(R\) and weight scaling \(\sigma\), and provides intuition for the role of graph-based operators in regularization.  It thus serves as a foundation for the more complex image reconstruction scenarios discussed later.
\begin{example}\label{exp: first}
    Consider a $2\times2$ grayscale image with pixel intensities given as in Table~\ref{Tab:Example}.
\begin{table}[htbp]
\centering
\begin{tabular}{|c|c|c|}
\Xhline{1.2pt}
\diagbox[width=4em]{$i$}{$j$} & 0   & 1   \\
\Xhline{0.8pt}
0                             & 0.2 & 0.3 \\
\Xhline{0.8pt}
1                             & 0.5 & 0.1 \\
\Xhline{1.2pt}
\end{tabular}
\caption{Pixel values indexed by $(i, j)$}
\label{Tab:Example}
\end{table}
This image corresponds a function $\textbf{x}: S \to [0, 1],$ where each pixel $a = (i,j) \in S$ has an intensity value $\textbf{x}(a).$ Here $S= \{a_1 = (0,0), a_2 = (0,1), a_3 = (1, 0), a_4 = (1,1)\}$ and we will use (\ref{eqn: edge weight fun})  to define the edge weight function with $\eth(a, b)= |i_a - i_b| + |j_a - j_b|$, $R=1$ (so only immediate neighbors are connected) and $\sigma = 0.01$ (for visible weight variation). Under these conditions, the weight matrix $W$ and diagonal degree matrix $D$ are given as 
\[
W =
\begin{bmatrix}
0         & 0.3679 & 0.0001 & 0         \\
0.3679 & 0         & 0         & 0.0183 \\
0.0001 & 0         & 0         & 0.0000 \\
0         & 0.0183 & 0.0000 & 0
\end{bmatrix}, D =
\begin{bmatrix}
0.3680 & 0          & 0          & 0          \\
0          & 0.3861 & 0          & 0          \\
0          & 0          & 0.0001 & 0          \\
0          & 0          & 0          & 0.0183
\end{bmatrix}.
\]
 Further, using (\ref{Delta = D-W}), we can calculate the graph Laplacian $\Delta_{\textbf{x}}$ of $\textbf{x}.$

   \begin{figure}
  \centering
  \includegraphics[width=0.38\textwidth]{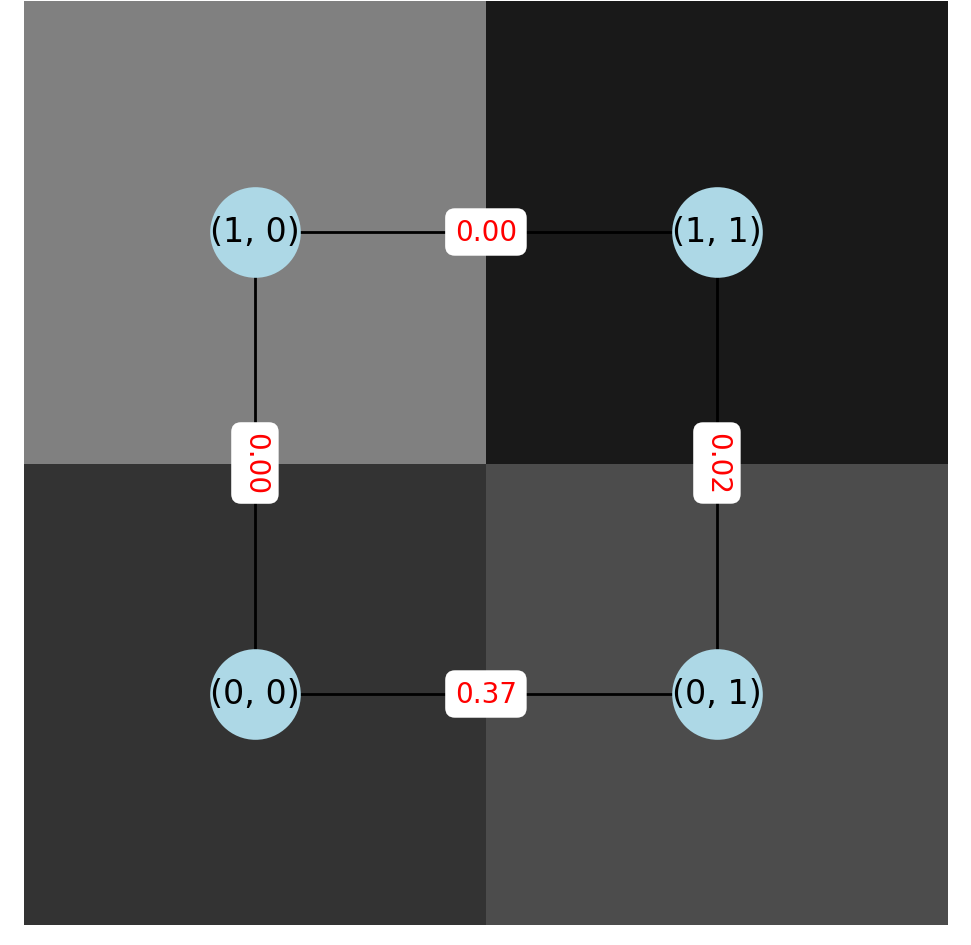} 
  \caption{$2\times2$ grayscale image with an induced graph. Each node corresponds to a pixel and the red edge labels indicate the weights based on similarity and spatial proximity. }
  \label{fig:example}
\end{figure}
\end{example}
We also refer to Fig. \ref{fig:sample} and Fig. \ref{fig:three-inline} for a true image in grayscale and its step-by-step conversion to its graph representation, respectively. Moreover, Fig. \ref{fig:sample2} shows the true grayscale image and magnitude of its graph Laplacian. 
\section{The iterative regularization method assisted by graph Laplacian}\label{sec:IRMGP} 
\begin{figure}
    \centering
    \includegraphics[width=0.47\textwidth]{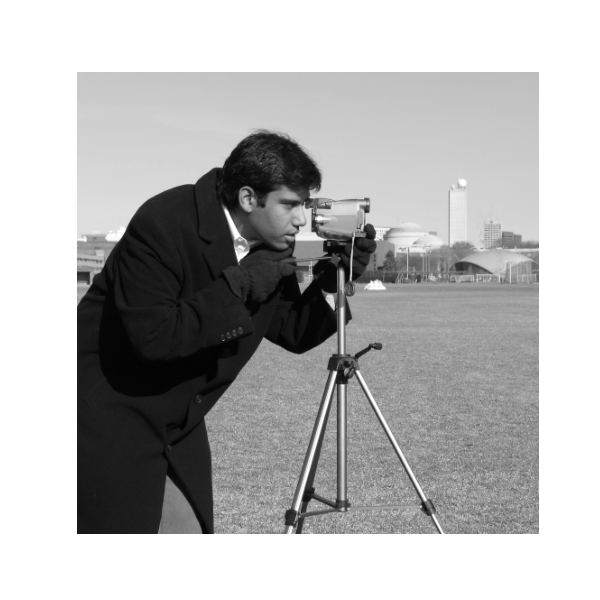} 
    \caption{True image in grayscale.}
    \label{fig:sample}
\end{figure}
In this section, we propose and analyze a novel iterative regularization method that incorporates graph Laplacian-based information, referred to as \texttt{IRMGL+\(\Psi\)}. We conduct a rigorous convergence analysis of the method and examine its regularization properties when combined with a standard a posteriori stopping criterion (\ref{eqn: discrepancy}).
The following baseline assumptions on the forward operator~$A$ are imposed.
\begin{assumption}\label{hypo: bounded}
    $A: \mathcal{D}(A) \subset \mathcal{U} \to \mathcal{V}$ is a bounded linear operator.
\end{assumption}
\begin{assumption}\label{hypo: soln existence}
       There exists $u^\dagger \in \mathcal{B}(u_0, \wp)$ such that $Au^\dagger=v,$ where $\mathcal{B}(u_0, \wp) := \{u \in \mathcal{U} : \|u - u_0\| \leq \wp\}$ denotes a closed ball of radius~$\wp$ centered at~$u_0$.
\end{assumption}
\begin{remark}
\cref{hypo: soln existence} ensures that the exact solution $u^\dagger$ is consistent with the forward model and lies in a small neighborhood of the reconstruction obtained from the exact data, $u_0=\Psi(v)$. This assumption is reasonable for several standard choices of $\Psi$.
For the Tikhonov regularizer 
\[
 \Psi_{\mathrm{Tik}}(v) = \arg\min_{u \in \mathcal{D}(A)}\Bigl\{\tfrac{1}{2}\|Au-v\|^2 + \tfrac{\lambda}{2}\|u\|^2\Bigr\},
\]
the normal equation yields $ \Psi_{\mathrm{Tik}}(v)=(A^*A+\lambda I)^{-1}A^*A\,u^\dagger$. If $A^*A$ is boundedly invertible with smallest singular value $\sigma_{\min}>0$, then
\[
\| \Psi_{\mathrm{Tik}}(v) - u^\dagger\| \le \frac{\lambda}{\sigma_{\min}^2+\lambda}\,\|u^\dagger\| = \mathcal{O}(\lambda).
\]
Hence, for sufficiently small $\lambda$, $u^\dagger$ lies within $\mathcal B(u_0,\wp)$ for an appropriately chosen $\wp$.
Similarly, for the total variation (TV) regularizer 
\[
\Psi_{\mathrm{TV}}(v) = \arg\min_{u \in \mathcal{D}(A)}\Bigl\{\tfrac{1}{2}\|Au-v\|^2 + \lambda\,\mathrm{TV}(u)\Bigr\},
\]
assuming $v=Au^\dagger$ and $\mathrm{TV}(u^\dagger)<\infty$, classical results in variational regularization (see, e.g., \cite{Burger2004,engl1996regularization}) guarantee that $\Psi_{\mathrm{TV}}(v) \to u^\dagger$ as $\lambda\downarrow0$, provided $u^\dagger$ is the minimal-TV solution of $Au=v$. Consequently, for small $\lambda$, $u^\dagger\in\mathcal B(u_0,\wp)$.

In practice, initialization schemes such as FBP, TV and Tikhonov generally provide reconstructions that are quantitatively closer to the true solution \(u^\dagger\), while the adjoint-based initialization \(A^*v\) typically offers a rougher approximation and may therefore require a larger neighborhood radius \(\wp\) to satisfy the theoretical assumption. It is important to emphasize that this assumption is primarily introduced for analytical convenience. In practice, the method demonstrates stable and consistent performance across different initializations, indicating that the assumption holds limited practical relevance.
\end{remark}

\begin{figure}[htbp]
       \includegraphics[width=0.98\textwidth, height=4.5cm]{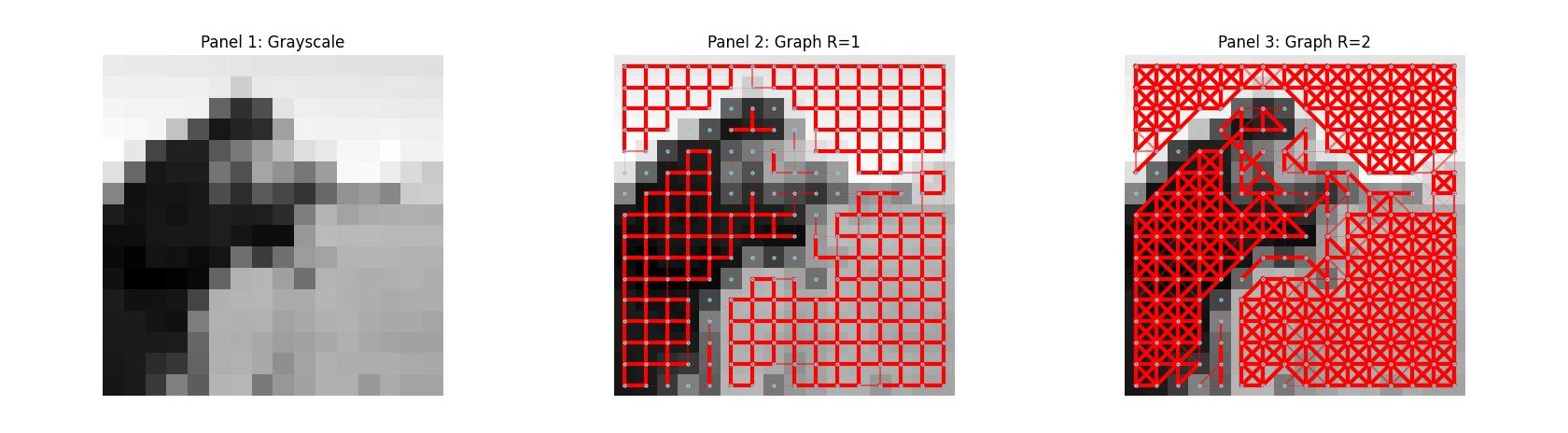}
    \caption{ This figure provides a step-by-step illustration of the process of converting an image $\mathbf{x}$ into a graph representation. On left, a $16 \times 16$ grayscale image is displayed, where the intensity of each pixel is determined by the function $\mathbf{x}$, which encapsulates the underlying image information. In second panel, each pixel is mapped to a graph node (depicted as a sky blue circle), with its location corresponding to its discrete position in the grid $\mathbb{Z}^2$. Edges between nodes are then constructed using the data-dependent weight function $w_{\mathbf{x}}(a, b)$, parameterized by $R = 1$ and $\sigma = 0.005$, which quantifies similarity based on pixel intensities. The strength of these connections is visually represented by the thickness of red edges: thicker edges indicate higher similarity between adjacent pixels, while thinner edges reflect greater dissimilarity. The right panel illustrates the effect of increasing the neighborhood radius to $R = 2$, resulting in a denser connectivity pattern that incorporates a broader local context.}
    \label{fig:three-inline}
\end{figure}

\begin{figure}
    \centering
    \includegraphics[width=1.0\textwidth]{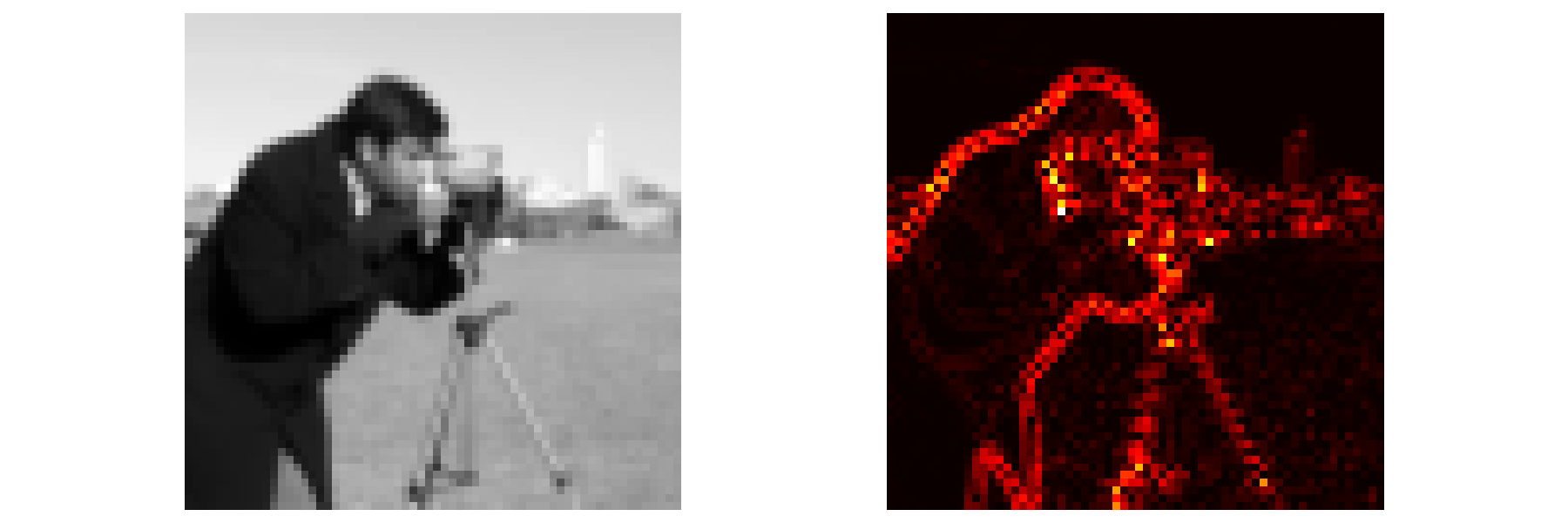} 
   \caption{The left panel shows a $64\times64$ grayscale image. The right panel displays the magnitude of the graph Laplacian, $|\Delta_{\mathbf{x}}\textbf{x}|$, computed using (\ref{Eqn: Graph Laplacian}). This visualization effectively highlights the edge information. The `hot' colormap is used, which maps low values to black and progressively higher values through red, orange, yellow, and white, emphasizing areas with significant local intensity variation.}
    \label{fig:sample2}
\end{figure}

\subsection{The method}\label{subsec: The method}
In this subsection, we present a detailed description of the proposed \texttt{IRMGL+\(\Psi\)} method~\eqref{main iterative schrme}.

\noindent Let us begin by introducing the step size $\alpha_k^\delta$ and the weighted parameter $\beta_k^\delta$ to promote rapid convergence of the method~\eqref{main iterative schrme}. It is natural to select $\alpha_k^\delta$ and $\beta_k^\delta$ adaptively at each iteration in such a way that the iterates remain close to a solution of~(\ref{Model eqn}). Accordingly, we define the parameters 
\begin{equation}\label{alpha}
   0 < \eta \leq  \alpha_k^\delta = 
        \min \left\{ \frac{\eta
    _0\|Au_k^\delta - v^\delta\|^2}{\|A^* (Au_k^\delta -v^\delta)\|^2}, \eta_1 \right\},
\end{equation}
\begin{equation}\label{beta}
   \beta_k^\delta =  \begin{cases}
      \min \left\{ \frac{\nu_0 \|A u_k^\delta - v^\delta\|^2}{\|\Delta_{u_k^\delta} u_k^\delta\|}, \frac{\nu_1}{\|\Delta_{u_k^\delta} u_k^\delta\|}, \nu_2 \right\} , & \text{if } \|\Delta_{u_k^\delta} u_k^\delta\| \neq 0\\ 
      0, &\text{if } \|\Delta_{u_k^\delta} u_k^\delta\| = 0,
    \end{cases} 
\end{equation}
where \(\eta_0, \eta_1, \nu_0, \nu_1\) and \(\nu_2\) are some fixed positive constants. 
\begin{remark}
    Since $A$ is a bounded linear operator, $\|A^*(Au_k^\delta - v^\delta)\| \leq \|A^*\|\|Au_k^\delta - v^\delta\|$. Hence, whenever $ \|A^* (Au_k^\delta -v^\delta)\| \neq 0$, we have
    \[ \frac{
\|Au_k^\delta - v^\delta\|^2}{\|A^* (Au_k^\delta -v^\delta)\|^2} \geq \frac{1}{\|A^*\|^2} = \frac{1}{\|A\|^2}.\] 
Therefore,  it holds
\[\alpha_k^\delta = \min \left\{ \frac{\eta
    _0\|Au_k^\delta - v^\delta\|^2}{\|A^* (Au_k^\delta -v^\delta)\|^2}, \eta_1 \right\} \geq \min \left\{ \frac{\eta
    _0}{\|A\|^2}, \eta_1 \right\} = \underline{\eta} >0.\]
    Thus any fixed choice $0 < \eta \leq \underline{\eta} = \min \left\{ \frac{\eta
    _0}{\|A\|^2}, \eta_1 \right\}$ gives the iteration independent positive lower bound for the admissible step size. In the special case where \( \|A^*(A u_k^\delta - v^\delta)\| = 0 \), the condition  
\begin{equation*}
0 < \eta \leq \alpha_k^\delta = \eta_1
\end{equation*}
still ensures the existence of such a positive lower bound.
\end{remark}
The pseudo code of the iterative scheme~\eqref{main iterative schrme} is outlined in \Cref{alg:buildtree}.
\begin{algorithm}
\caption{\texttt{IRMGL+\(\Psi\)} with early stopping}
\label{alg:buildtree}
\begin{algorithmic}
\STATE{Given: $A, v^\delta, \delta, \Psi, \sigma, R.$}
\STATE{Initialize: $u_0^\delta=\Psi(v^\delta)$}
\WHILE{$\|Au_k^\delta - v^\delta\|> \tau \delta$}
\STATE{Given $u_{k}^\delta, R, \sigma,$ compute $\Delta_{u_k^\delta}$ as defined in Section \ref{sec:prem}.}
\STATE{Update $u_{k+1}^\delta = u_k^\delta - \alpha_k^\delta A^*(Au_k^\delta - v^\delta) - \beta_k^\delta \Delta_{u_k^\delta}u_k^\delta,$}
\STATE{where $\alpha_k^\delta$ and $\beta_k^\delta$ are defined in \eqref{alpha} and \eqref{beta}, respectively.}
\ENDWHILE
\RETURN $u_{k_\delta}^\delta.$
\end{algorithmic}
\end{algorithm}
\begin{remark}
At each \((k+1)\)-th iteration of the Algorithm~\ref{alg:buildtree}, a new graph Laplacian \(\Delta_{u_k^\delta}\) is constructed based on the previous approximate solution \(u_k^\delta\), for any given initial reconstructor \(\Psi\). In other words, the regularization term is adaptively updated at each iteration.
\end{remark}
Next, we introduce the constant  
\begin{equation}\label{assum: On C}
    C := \eta - \frac{\eta_1}{\tau} - \nu_0(\wp + \nu_1) - \eta_0\eta_1,
\end{equation}
and identify a set of parameter choices that ensures \( C > 0 \).  
Assuming \( C > 0 \), we immediately obtain the inequality  
\[
    \eta_1 \geq \eta > \frac{\eta_1}{\tau} + \nu_0(\wp + \nu_1) + \eta_0\eta_1.
\]
This, in turn, implies that  
\[
    1 - \frac{1}{\tau} - \eta_0 > \frac{\nu_0(\wp + \nu_1)}{\eta_1}.
\]
Hence, a convenient strategy for selecting the parameters is as follows. First, choose \(\tau\) such that  
\(
    \tau > \frac{1}{1 - \eta_0},
\)
which guarantees that the left-hand side of the above inequality is positive. Then, select \(\nu_0, \nu_1,\) and \(\eta_1\) so that  
\[
    \nu_0 < \frac{\eta_1\big(1 - \frac{1}{\tau} - \eta_0\big)}{\wp + \nu_1}.
\]
Under these choices, the constant \( C \) remains strictly positive for all admissible parameter values.
\begin{lemma}\label{Lemma: Monotonicity}
    Let \Cref{hypo: bounded} and $\ref{hypo: soln existence}$ hold. Consider \Cref{alg:buildtree} and let $n \leq k_\delta$  be an integer, where $k_\delta$ is the stopping index as defined in $(\ref{eqn: discrepancy})$. Let the constant $C$ as in \cref{assum: On C} be positive.
    Then the following hold: 
    \begin{enumerate}
        \item [(i)] The iterates of \texttt{IRMGL+\(\Psi\)} method $(\ref{main iterative schrme})$ are monotonic  for any $0 \leq k < n,$ i.e.,
        $$\|u_{k+1}^\delta - u^\dagger\| \leq  \|u_{k}^\delta - u^\dagger\|.$$
        \item [(ii)] The iterates $u_k^\delta \in \mathcal{B}(u^\dagger, \wp)$ for all $0 \leq k \leq n$ and  
        \[\sum_{k=0}^{n}  \|Au_k^\delta - v^\delta\|^2 \leq  \frac{1}{2C} \|u_{0}^\delta - u^\dagger\|^2.\] 
        \end{enumerate}
\end{lemma}
\begin{proof}
By \Cref{hypo: soln existence}, we have $u_0 \in \mathcal{B}(u^\dagger, \wp)$. Suppose further that $u_k^\delta \in \mathcal{B}(u^\dagger, \wp)$ for some iteration $k$. This implies that $u_k^\delta \in \mathcal{B}(u_0, 2\wp)$. We define
\[
x_k^\delta := u_k^\delta - u^\dagger \quad \text{and} \quad d_k^\delta := \alpha_k^\delta A^*(Au_k^\delta - v^\delta) + \beta_k^\delta \Delta_{u_k^\delta}u_k^\delta.
\]
With these definitions and in view of the update rule given in (\ref{main iterative schrme}), we obtain
\begin{equation}\label{split eqn}
    \|x_{k+1}^\delta\|^2 - \|x_k^\delta\|^2 = \|d_k^\delta\|^2 - 2\langle x_k^\delta, d_k^\delta \rangle.
\end{equation}
We now proceed to analyze each term on the right-hand side of the \eqref{split eqn}  individually. To begin, we consider the second term and estimate it as
\begin{align}\label{ek, dk}
 \nonumber  - \langle x_k^\delta, d_k^\delta \rangle &= \alpha_k^\delta \langle A(u^\dagger - u_k^\delta), Au_k^\delta - v^\delta \rangle + \beta_k^\delta \langle u^\dagger - u_k^\delta, \Delta_{u_k^\delta}u_k^\delta \rangle \\ \nonumber
   & = - \alpha_k^\delta \|Au_k^\delta - v^\delta\|^2 + \alpha_k^\delta\langle v - v^\delta, Au_k^\delta - v^\delta \rangle +  \beta_k^\delta \langle u^\dagger - u_k^\delta, \Delta_{u_k^\delta}u_k^\delta \rangle \\ \nonumber
   & \leq  - \alpha_k^\delta \|Au_k^\delta - v^\delta\|^2 + \alpha_k^\delta\delta\|Au_k^\delta - v^\delta\| + \beta_k^\delta\wp \| \Delta_{u_k^\delta}u_k^\delta\| \\ 
   & \leq - \left(\eta - \frac{\eta_1}{\tau}\right)\|Au_k^\delta - v^\delta\|^2 + \wp\nu_0 \|Au_k^\delta - v^\delta\|^2,
\end{align}
where we have used (\ref{eqn: delta}), (\ref{eqn: discrepancy}) and \eqref{alpha} in deriving \eqref{ek, dk}. 
Similarly, by using the definitions of $\alpha_k^\delta$ and $ \beta_k^\delta$ with the inequality $(a_1 + a_2)^2 \leq 2(a_1^2 + a_2^2)$ for $a_1, a_2 \in \mathbb{R}$, we can estimate the first term of (\ref{split eqn}) as
\begin{align}\label{dk}
 \nonumber   \|d_k^\delta\|^2 &= \| \alpha_k^\delta A^*(Au_k^\delta - v^\delta) + \beta_k^\delta \Delta_{u_k^\delta}u_k^\delta\|^2 \\ \nonumber
    & \leq 2 \eta_0\eta_1 \|Au_k^\delta - v^\delta\|^2 + 2\nu_0 \nu_1\|Au_k^\delta - v^\delta\|^2 \\ 
    & \leq 2(\eta_0\eta_1+ \nu_0 \nu_1)\|Au_k^\delta - v^\delta\|^2. 
\end{align}
 Further, by using (\ref{ek, dk}) and (\ref{dk}) in (\ref{split eqn}), we get
\begin{equation}\label{Concluding eqn of first Lemma}
     \|x_{k+1}^\delta\|^2 - \|x_k^\delta\|^2 \leq -2\left( \eta- \frac{\eta_1}{\tau} -\nu_0(\wp + \nu_1) - \eta_0\eta_1\right)\|Au_k^\delta - v^\delta\|^2.
\end{equation}
Finally, by substituting (\ref{assum: On C}) in (\ref{Concluding eqn of first Lemma}), we obtain the required monotonicity for $0 \leq k <n$.
Next, we discuss the proof of assertion $(ii)$. The inequality (\ref{Concluding eqn of first Lemma}) guarantees that
\[
\|u_{k+1}^\delta - u^\dagger\| \leq \|u_{k}^\delta - u^\dagger\| \leq \wp,
\]
which implies that $u_{k+1}^\delta \in \mathcal{B}(u^\dagger, \wp)$. By summing the inequality (\ref{Concluding eqn of first Lemma}) from $k = 0$ to $k = n$, we obtain
\begin{equation}\label{Concluding eqn of first Lemma in C}
   \sum_{k=0}^{n} \left[\|x_{k+1}^\delta\|^2 - \|x_k^\delta\|^2\right] \leq -2C  \sum_{k=0}^{n}\|Au_k^\delta - v^\delta\|^2.
\end{equation}
Finally, it follows from the last inequality that
\begin{equation*}
 \sum_{k=0}^{n}\|Au_k^\delta - v^\delta\|^2 \leq \frac{1}{2C} \|x_{0}^\delta\|^2 = \frac{1}{2C}\|u_0^\delta - u^\dagger\|^2.
\end{equation*}
This establishes the desired result.
\end{proof}

\noindent In the following result, we show that \texttt{IRMGL+\(\Psi\)} method terminates in a finite number of steps, i.e., Algorithm \ref{alg:buildtree} is well defined. The proof leverages Lemma~\ref{Lemma: Monotonicity} to substantiate this conclusion.

\begin{theorem}
    Let all the assumptions of \Cref{Lemma: Monotonicity} be satisfied. Let \Cref{alg:buildtree} be initiated with given $u_0^\delta$. Then the algorithm must terminates in finitely many steps, i.e., there exist a finite integer $k_\delta$ such that 
    \begin{equation*}
    \|Au_{k_\delta}^\delta - v^\delta\| \leq \tau \delta < \|Au_k^\delta - v^\delta\|, \quad 0 \leq k < k_\delta.
    \end{equation*}
\end{theorem}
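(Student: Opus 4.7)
The plan is to argue by contradiction using the summability bound from \Cref{Lemma: Monotonicity}(ii). Suppose for contradiction that the stopping criterion is never met, i.e., $\|Au_k^\delta - v^\delta\| > \tau\delta$ for every $k \geq 0$. Then $k_\delta$ as defined in \eqref{eqn: discrepancy} is $+\infty$, so every finite integer $n$ satisfies $n \leq k_\delta$ trivially, which allows us to invoke \Cref{Lemma: Monotonicity}(ii) for every $n$.

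Applying the lemma with arbitrary $n \in \mathbb{N}$ yields the uniform bound
\[
\sum_{k=0}^{n} \|Au_k^\delta - v^\delta\|^2 \;\leq\; \frac{1}{2C}\,\|u_0^\delta - u^\dagger\|^2.
\]
Under the non-stopping hypothesis each summand on the left exceeds $(\tau\delta)^2$, hence the left side is at least $(n+1)(\tau\delta)^2$. Rearranging gives $n+1 \leq \frac{\|u_0^\delta - u^\dagger\|^2}{2C(\tau\delta)^2}$, a fixed finite quantity independent of $n$. Taking $n$ large enough produces the desired contradiction. Consequently, the set $\{k \geq 0 : \|Au_k^\delta - v^\delta\| \leq \tau\delta\}$ is nonempty, and defining $k_\delta$ as its smallest element (which exists by well-ordering of $\mathbb{N}$) immediately yields $\|Au_k^\delta - v^\delta\| > \tau\delta$ for all $0 \leq k < k_\delta$ together with $\|Au_{k_\delta}^\delta - v^\delta\| \leq \tau\delta$.

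The only delicate point I anticipate is the logical circularity between \Cref{Lemma: Monotonicity} and the theorem: the lemma is stated for $n \leq k_\delta$, but $k_\delta$ is precisely what the theorem constructs. The resolution is that the proof of \Cref{Lemma: Monotonicity} only uses the \emph{non-stopping condition} $\|Au_k^\delta - v^\delta\| > \tau\delta$ for $0 \leq k < n$ (to invoke $\alpha_k^\delta\delta \leq \eta_1\delta/(\tau\delta)\,\|Au_k^\delta-v^\delta\|$) together with the inductive confinement of the iterates inside $\mathcal{B}(u^\dagger,\wp)$. Under the contradiction hypothesis the non-stopping condition holds for \emph{every} $k$, so the induction that keeps $u_k^\delta \in \mathcal{B}(u^\dagger,\wp)$ and produces the partial-sum estimate runs unrestrictedly, and the argument above goes through without any hidden dependence on a finite $k_\delta$.
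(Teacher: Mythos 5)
Your proof is correct and takes essentially the same approach as the paper's: both derive the bound $(n+1)\tau^2\delta^2 \leq \frac{1}{2C}\|u_0^\delta - u^\dagger\|^2$ from the partial-sum estimate of \Cref{Lemma: Monotonicity} under the non-stopping hypothesis and conclude by contradiction as $n \to \infty$. Your explicit resolution of the apparent circularity between the lemma's hypothesis $n \leq k_\delta$ and the theorem's conclusion is a point the paper leaves implicit, but it does not change the argument.
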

\begin{proof}
Suppose \( n \geq 0 \) is an integer such that \( \|A u_k^\delta - v^\delta\| > \tau \delta \) holds for all \( k = 0, 1, \dots, n \). Then, be engaging \eqref{Concluding eqn of first Lemma in C}, we obtain
\begin{align*}
     2C\sum_{k=0}^{k=n}  \|Au_k^\delta - v^\delta\|^2 &\leq \sum_{k=0}^{k = n}\left[\|x_{k}^\delta\|^2 - \|x_{k+1}^\delta\|^2\right],\\
     & = \|x_0^\delta\|^2 - \|x_{n+1}^\delta\|^2 \leq \|x_0^\delta\|^2.
\end{align*}
By virtue of \eqref{eqn: discrepancy}, we deduce that
\begin{equation}\label{eqn: thm 3.5 conclusion}
  2(n+1)C\tau^2\delta^2\leq  2C\sum_{k=0}^{k=n}  \|Au_k^\delta - v^\delta\|^2 \leq \|x_0^\delta\|^2 < \infty.
\end{equation}
If there exists no finite integer \( k_\delta \) such that condition (\ref{eqn: discrepancy}) is satisfied, then by taking the limit as \( n \to \infty \) in \eqref{eqn: thm 3.5 conclusion}, we get a contradiction. Therefore,  Algorithm \ref{alg:buildtree} is guaranteed to terminate after a finite number of iterations.
\end{proof}
\subsection{Convergence for exact data}\label{Subsec: Convergence for exact data}
In this subsection, we investigate the counterpart of Algorithm~\ref{alg:buildtree} in the form of \Cref{alg:exactdata} that employs the exact data \( v \) instead of $v^\delta$.
\begin{algorithm}
\caption{\texttt{IRMGL+\(\Psi\)} with exact data}
\label{alg:exactdata}
\begin{algorithmic}
\STATE{Given: $A, v, \Psi, \sigma, R.$}
\STATE{Initialize: $u_0=\Psi(v)$}
\WHILE{$k \geq0$}
\STATE{Given $u_{k}, R, \sigma,$ compute $\Delta_{u_k}$ as defined in Section \ref{sec:prem}.}
\STATE{Update $u_{k+1} = u_k - \alpha_k A^*(Au_k - v) - \beta_k \Delta_{u_k}u_k,$}
\STATE{where $\alpha_k$ and $\beta_k$ are defined as  \begin{equation*}
     \alpha_k = \min \left\{ \frac{\eta
    _0\|Au_k - v\|^2}{\|A^* (Au_k -v)\|^2}, \eta_1 \right\} \quad \text{when } Au_k \neq v,
\end{equation*}
\begin{equation*}
   \beta_k =  \begin{cases}
      \min \left\{ \frac{\nu_0 \|A u_k - v\|^2}{\|\Delta_{u_k} u_k\|}, \frac{\nu_1}{\|\Delta_{u_k} u_k\|}, \nu_2 \right\} , & \text{if } \|\Delta_{u_k} u_k\| \neq 0\\ 
      0, &\text{if } \|\Delta_{u_k} u_k\| = 0
    \end{cases} 
\end{equation*}}
\ENDWHILE
\end{algorithmic}
\end{algorithm}
To establish the convergence of our method for exact data, we demonstrate that the sequence \(\{u_k\}_{k \geq 0} \) generated by Algorithm~\ref{alg:exactdata} forms a Cauchy sequence. 
We begin by presenting a preliminary result for the sequence \( \{u_k\}_{k \geq 0} \).
 
\begin{lemma}\label{lemma: monotonicity for exact data}
    Let \Cref{hypo: bounded} and  \Cref{hypo: soln existence} hold. Let $\{u_k\}_{k \geq 0}$ be the sequence of iterates generated by the \Cref{alg:exactdata}. Then $u_k \in \mathcal{B}(u^\dagger, \wp)$ for all integers $k \geq 0$ and for the solution $u^\dagger$ of $(\ref{Model eqn})$, there holds
 \begin{equation}
     \|u_{k+1} - u^\dagger\|^2 - \|u_k - u^\dagger\|^2 \leq -2C_0 \|Au_k - v\|^2 \quad \forall \; k \geq0,
 \end{equation}
 where $$C_0: = \eta  -\nu_0(\wp + \nu_1) - \eta_0\eta_1 > 0.$$ Also, the sequence $\{\|u_k - u^\dagger\|\}_{k \geq 0}$ is monotonically decreasing and 
 \begin{equation}\label{sum for non noisy case}
     \sum_{k=0} ^\infty \|Au_k - v\|^2 \leq \frac{1}{2C_0} \|u_0 - u^\dagger\|^2 < \infty.
 \end{equation}
 This means that $\|Au_k - v\| \to 0$ as $k \to \infty$ i.e.,  the residual norm converges to zero. 
\end{lemma}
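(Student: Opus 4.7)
The plan is to mirror the argument of \Cref{Lemma: Monotonicity}, exploiting the fact that in the exact-data setting the noise term disappears entirely, which both simplifies the cross-term estimate and removes the need for a stopping index. I would begin by proving the three conclusions jointly by induction on $k$, using $u_0 \in \mathcal{B}(u^\dagger, \wp)$ from \Cref{hypo: soln existence} as the base case, and taking as the inductive hypothesis that $u_k \in \mathcal{B}(u^\dagger, \wp)$.

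For the inductive step, I would introduce the abbreviations $x_k := u_k - u^\dagger$ and $d_k := \alpha_k A^*(Au_k - v) + \beta_k \Delta_{u_k} u_k$, so that the update rule in \Cref{alg:exactdata} yields the identity
\begin{equation*}
\|x_{k+1}\|^2 - \|x_k\|^2 = \|d_k\|^2 - 2\langle x_k, d_k\rangle.
\end{equation*}
The cross term is handled by expanding
\begin{equation*}
-\langle x_k, d_k\rangle = \alpha_k\langle A(u^\dagger - u_k), Au_k - v\rangle + \beta_k \langle u^\dagger - u_k, \Delta_{u_k} u_k\rangle,
\end{equation*}
and using $Au^\dagger = v$ to rewrite the first inner product as $-\alpha_k\|Au_k - v\|^2$. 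Crucially, in contrast with the noisy case of \Cref{Lemma: Monotonicity}, there is no residual term of the form $\alpha_k\delta\|Au_k - v\|$, which is precisely why $C_0$ lacks the $\eta_1/\tau$ correction. For the second inner product, I would apply Cauchy–Schwarz together with the inductive hypothesis $\|u^\dagger - u_k\| \leq \wp$ and the defining bound $\beta_k \|\Delta_{u_k} u_k\| \leq \nu_0 \|Au_k - v\|^2$ from the specification of $\beta_k$. The quadratic term $\|d_k\|^2$ is estimated exactly as in \eqref{dk}, yielding $\|d_k\|^2 \leq 2(\eta_0\eta_1 + \nu_0\nu_1)\|Au_k - v\|^2$ via the bounds $\alpha_k \leq \eta_1$, $\alpha_k\|A^*(Au_k-v)\|^2 \leq \eta_0\|Au_k-v\|^2$, and $\beta_k\|\Delta_{u_k} u_k\| \leq \nu_1$.

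Combining these estimates gives
\begin{equation*}
\|x_{k+1}\|^2 - \|x_k\|^2 \leq -2\bigl(\eta - \nu_0(\wp + \nu_1) - \eta_0\eta_1\bigr)\|Au_k - v\|^2 = -2C_0\|Au_k - v\|^2,
\end{equation*}
which both closes the induction (since $\|x_{k+1}\| \leq \|x_k\| \leq \wp$ gives $u_{k+1} \in \mathcal{B}(u^\dagger, \wp)$) and proves monotonicity of $\{\|u_k - u^\dagger\|\}_{k\geq 0}$. Telescoping this inequality from $k=0$ to $k=N$ and letting $N\to\infty$ yields
\begin{equation*}
2C_0 \sum_{k=0}^{\infty} \|Au_k - v\|^2 \leq \|u_0 - u^\dagger\|^2,
\end{equation*}
which is \eqref{sum for non noisy case}, and summability of the series forces $\|Au_k - v\| \to 0$.

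There is no real obstacle here beyond bookkeeping: the entire argument is a transcription of the proof of \Cref{Lemma: Monotonicity} with $\delta = 0$. The one point requiring a sentence of comment is that positivity of $C_0$ is not a new hypothesis but rather a consequence of $C > 0$ assumed in \eqref{assum: On C}, since $C_0 = C + \eta_1/\tau > C > 0$; I would state this explicitly to make clear that no additional condition on the parameters is imposed in the exact-data regime.
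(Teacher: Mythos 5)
Your proposal is correct and is exactly the argument the paper intends: the paper's proof of this lemma is a one-line appeal to the analogy with \Cref{Lemma: Monotonicity}, and you have simply written out that analogy with $\delta = 0$, correctly dropping the $\eta_1/\tau$ term from the constant. Your added remark that $C_0 = C + \eta_1/\tau > 0$ follows from \eqref{assum: On C} is a useful clarification the paper leaves implicit.
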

\begin{proof}
   The result follows directly by analogy with the proof of Lemma \ref{Lemma: Monotonicity}, and the positivity of $C_0$ can be guaranteed in the same manner as that of $C$.
\end{proof}
  \noindent To this end, we are ready to present the convergence result in the case of $\delta=0$.
\begin{theorem}\label{Convergence for exact data}
   Under the assumptions of \Cref{lemma: monotonicity for exact data}, the iterates $\{u_k\}_{k \geq 0}$ generated by \Cref{alg:exactdata} converges to $u^\dagger$, which is a solution of $(\ref{Model eqn})$.
\end{theorem}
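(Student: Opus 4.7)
The plan is to show that $\{u_k\}_{k \geq 0}$ is a Cauchy sequence in $\mathcal{U}$ and then identify its limit as a solution of (\ref{Model eqn}). \Cref{lemma: monotonicity for exact data} already supplies the two essential ingredients needed for the argument: the error norms $\|u_k - u^\dagger\|$ decrease monotonically (and hence converge to some limit), while the residuals satisfy $\sum_{k=0}^{\infty}\|Au_k - v\|^2 < \infty$, so that the tail sum $\sum_{k \geq m}\|Au_k - v\|^2$ vanishes as $m \to \infty$. Throughout, set $e_k := u_k - u^\dagger$; by the lemma, $\|e_k\| \leq \wp$ for every $k$.

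For the Cauchy property, fix $l > m$ and pick $k^* \in \{m, m+1, \dots, l\}$ minimizing $\|Au_k - v\|$ over this range. I would then use the splitting
\[
\|u_l - u_m\|^2 \leq 2\|u_l - u_{k^*}\|^2 + 2\|u_{k^*} - u_m\|^2,
\]
together with the polarization-type identities
\[
\|u_{k^*} - u_m\|^2 = \|e_m\|^2 - \|e_{k^*}\|^2 + 2\langle e_{k^*}, u_{k^*} - u_m\rangle,
\]
\[
\|u_l - u_{k^*}\|^2 = \|e_l\|^2 - \|e_{k^*}\|^2 + 2\langle e_{k^*}, u_{k^*} - u_l\rangle.
\]
Because $\{\|e_k\|^2\}$ is convergent and hence Cauchy, the first two differences on the right-hand sides contribute an amount that tends to zero as $m \to \infty$. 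The inner-product terms are handled by expanding $u_{k^*} - u_m$ and $u_{k^*} - u_l$ as telescoping sums through the update rule of \Cref{alg:exactdata} and then invoking $\langle e_{k^*}, A^{*}(Au_k - v)\rangle = \langle Au_{k^*} - v, Au_k - v\rangle$. Each summand is bounded by
\[
\alpha_k \|Au_{k^*} - v\|\|Au_k - v\| + \beta_k \|e_{k^*}\|\|\Delta_{u_k}u_k\| \leq (\eta_1 + \nu_0\wp)\|Au_k - v\|^2,
\]
where $\alpha_k \leq \eta_1$, the minimality of $k^*$ yields $\|Au_{k^*} - v\| \leq \|Au_k - v\|$, the bound $\|e_{k^*}\| \leq \wp$ is inherited from \Cref{lemma: monotonicity for exact data}, and the definition of $\beta_k$ in \eqref{beta} forces $\beta_k\|\Delta_{u_k}u_k\| \leq \nu_0\|Au_k - v\|^2$. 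Summing yields upper bounds proportional to $\sum_{k \geq m}\|Au_k - v\|^2$, which tends to zero uniformly in $l$, so $\{u_k\}$ is Cauchy.

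Finally, let $\bar u := \lim_{k \to \infty} u_k \in \mathcal{U}$. Boundedness of $A$ combined with $\|Au_k - v\| \to 0$ from \eqref{sum for non noisy case} gives $A\bar u = v$, so $\bar u$ solves (\ref{Model eqn}); this is the element the statement designates as a solution. The chief technical obstacle is that the graph-Laplacian contribution $\beta_k \Delta_{u_k}u_k$ is genuinely nonlinear, since $\Delta_{u_k}$ is rebuilt at every iterate, so the classical linear Landweber telescoping computation does not apply verbatim. The adaptive design of $\beta_k$ in \eqref{beta} is precisely what rescues the argument: it allows the nonlinear piece $\beta_k\langle e_{k^*}, \Delta_{u_k}u_k\rangle$ to be absorbed into the same residual-squared series $\sum\|Au_k - v\|^2$ that controls the linear part, making the minimal-index trick effective just as in the classical theory.
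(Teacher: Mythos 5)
Your proposal is correct and follows essentially the same route as the paper: a Cauchy-sequence argument based on choosing the index of minimal residual in $\{m,\dots,l\}$, the polarization identities that reduce everything to inner products $\langle e_{k^*}, u_{k^*}-u_m\rangle$, a telescoping expansion through the update rule bounded by $(\eta_1+\wp\nu_0)\sum_j\|Au_j-v\|^2$, and identification of the limit via $\|Au_k-v\|\to 0$. The only differences are cosmetic (you square the triangle inequality where the paper does not), so no further comment is needed.
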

\begin{proof}
    For $\delta =0,$ we define $x_k:= u_k - u^\dagger$ and $r_k:=Au_k -v$. Given $l \geq k$, we choose an integer $m$ with $l \geq m \geq k$ such that
    \begin{equation}\label{rn rp ineq}
        \|r_m\| \leq \| r_n\|, \quad \forall \hspace{1mm} k \leq n \leq l.
    \end{equation}
    By triangular inequality, we observe that 
\begin{equation}\label{xk triangular}
    \|x_l - x_k\| \leq \|x_l - x_m\| + \|x_m - x_k\|,
\end{equation}    
where 
\begin{align}
    \|x_l - x_m\|^2 &= 2\langle x_m - x_l, x_m \rangle + \|x_l\|^2 - \|x_m\|^2, \label{xl_xn} \\
    \|x_m - x_k\|^2 &= 2\langle x_k - x_m, x_m \rangle + \|x_m\|^2 - \|x_k\|^2. \label{xn_xm}
\end{align}
 From Lemma \ref{lemma: monotonicity for exact data} it is clear that $\{\|x_k\|\}_{k \geq 0}$ is a monotonically decreasing sequence bounded below by $0.$ Consequently, we suppose that \(\lim_{k \to \infty}\|x_k\| = \theta \geq 0.\) Hence, as a result, we write
\begin{align}
     \lim_{k \to \infty }\|x_l - x_m\|^2 &= 2 \lim_{k \to \infty }\langle x_m - x_l, x_m \rangle + \theta^2 - \theta^2, \label{xl xm limit}\\
     \lim_{k \to \infty }\|x_m - x_k\|^2 &= 2 \lim_{k \to \infty }\langle x_k - x_m, x_m \rangle + \theta^2 - \theta^2.\label{xm xk limit}
\end{align} 
Our aim is to demonstrate that $\{x_k\}_{k \geq 0}$ is a Cauchy sequence. To this end, we claim that $\langle x_m - x_l, x_m \rangle \to 0$ as $k \to \infty$. For proving the claim, it can be observe that  
\begin{align}
 \nonumber   |\langle x_m - x_l, x_m\rangle| &= |\langle u_m - u_l, x_m\rangle| \\ \nonumber
    & \leq \left| \left\langle \sum_{j=m}^{l-1} \alpha_j A^*(Au_j - v) + \beta_j \Delta_{u_j}u_j, u_m - u^\dagger \right\rangle \right| \\ \nonumber
     & \leq \sum_{j=m}^{l-1}\left| \left\langle  \alpha_j A^*(Au_j- v), u_m - u^\dagger \right\rangle \right| + \sum_{j=m}^{l-1}\left| \left\langle \beta_j \Delta_{u_j}u_j, u_m - u^\dagger \right\rangle \right| \\ \nonumber
     & \leq \eta_1 \sum_{j=m}^{l-1}\left| \left\langle   Au_j- v, Au_m - v \right\rangle \right| + \sum_{j=m}^{l-1}\beta_j \left| \left\langle  \Delta_{u_j}u_j, u_m - u^\dagger \right\rangle \right|, 
\end{align}
where, we have utilized the bound on $\alpha_j$ and the relation $Au^\dagger =v$ in deriving the last inequality.
Furthermore, by incorporating the definition of $\beta_k$ together with inequality~(\ref{rn rp ineq}), and using the fact that $u_k \in \mathcal{B}(u^\dagger, \wp)$ for all $k \geq 0$, the above inequality can be reformulated as 
\begin{align}\label{x_n x_l in terms of residual}
   \nonumber   |\langle x_m - x_l, x_m\rangle| & \leq \eta_1 \sum_{j=m}^{l-1}\|Au_j - v\|^2 + \wp \nu_0 \sum_{j=m}^{l-1}\|Au_j - v\|^2 \\ 
   &  \leq (\eta_1 + \wp\nu_0)\sum_{j=m}^{l-1}\|Au_j - v\|^2.
\end{align}
Reasoning in the same way for $|\langle x_k - x_m, x_m \rangle|$, we may write
\begin{equation}
 |\langle x_k - x_m, x_m \rangle| \leq    (\eta_1 + \wp \nu_0)\sum_{j=k}^{l-1}\|Au_j - v\|^2.
\end{equation}
From these estimates, it follows that \( |\langle x_m - x_l, x_m \rangle| \) and \( |\langle x_k - x_m, x_m \rangle| \) tend to zero as \( k \to \infty \), owing to \eqref{sum for non noisy case}. Consequently, from (\ref{xl xm limit}) and \eqref{xm xk limit} it follows that
\begin{align*}
     \lim_{k \to \infty }\|x_l - x_m\|^2 &= 2 \lim_{k \to \infty }\langle x_m - x_l, x_m \rangle =0, \\
     \lim_{k \to \infty }\|x_m - x_k\|^2 &= 2 \lim_{k \to \infty }\langle x_k - x_m, x_m \rangle =0.
\end{align*}
Hence, from \eqref{xk triangular}, \eqref{xl_xn} and \eqref{xn_xm}, it follows that the sequence \( \{x_k\}_{k \geq 0} \) is Cauchy. By the definition of \( x_k \), this immediately implies that the sequence \( \{u_k\}_{k \geq 0} \) is also Cauchy and therefore converges to some limit \( \hat{u} \). Furthermore, since the residuals \( Au_k - v \) tend to zero as \( k \to \infty \), the limit \( \hat{u} \) satisfies \eqref{Model eqn}, which concludes that $\hat{u} = u^\dagger$. Consequently, the iterates  converge to a solution of \eqref{Model eqn}.
\end{proof}
\subsection{Stability analysis}\label{subsec: stability}
In this subsection, we establish the stability of the \texttt{IRMGL+\(\Psi\)} method. For that, we require the following assumption. 
 \begin{assumption}\label{hypothesis stability}
     Let $v^\delta
     $ be the sequence of noisy data such that \(v^\delta \to v\) as \(\delta \to 0\). Then the reconstructor $\Psi$ satisfies
     \begin{equation*}
         \|\Psi(v^\delta) - \Psi(v)\| \to 0 \quad \text{as} \quad \delta \to 0.
     \end{equation*}
 \end{assumption}
It is worth noting that the above assumption is relatively mild and is satisfied by a broad class of reconstruction operators. A straightforward example is to consider \( \Psi \) as a (locally) Lipchitz continuous operator. That is, if \( \Psi \) satisfies
\[
\|\Psi(v^\delta) - \Psi(v)\| \leq K \|v^\delta - v\|,
\]
for some constant \( K > 0 \), then the assumption is clearly fulfilled. For a comprehensive discussion of the different reconstructors \(\Psi\) satisfying \cref{hypothesis stability}, the reader is referred to \cref{subsec:initial_rec}.


\noindent
The main difficulty in establishing stability results stems from the regularization term \( \mathcal{R}(u, v^\delta) \), which explicitly depends on the observed data \( v^\delta \).
Consequently, conventional analytical techniques cannot be applied in a straightforward manner and hence we propose a lemma that plays a crucial role in the stability analysis. It is worth noting that a similar result was established in \cite{bianchi2025data} for the variational setting, however, the analysis presented here is fundamentally different from that work.

\begin{lemma}\label{lemma: contraction of Delta}
  For the iterates \(u_k^\delta\) and $u_k$ generated by \Cref{alg:buildtree} and \Cref{alg:exactdata} respectively, it holds that 
  \begin{equation*}
      \|\Delta_{u_k^\delta}u_k - \Delta_{u_k}u_k\|_2 \leq \mathcal{H} \|u_k\|_2\|u_k^\delta - u_k\|_2,
  \end{equation*}
    where the constant \[\mathcal{H} =2 L_h \left( \max_{a,b}{g(a, b)}\sqrt{N}  +  \max_{a}\max_{b}{g(a, b)}\right),\] with \(g(a,b)\) the edge weight function specified in \eqref{eqn: edge weight fun}.
\end{lemma}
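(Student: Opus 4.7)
The plan is to decompose $\Delta_{\mathbf{x}} = D_{\mathbf{x}} - W_{\mathbf{x}}$ as in \eqref{Delta = D-W}, with $W_{\mathbf{x}}, D_{\mathbf{x}}$ denoting the weight and degree matrices associated with $\mathbf{x}$, and to reduce everything to a pointwise Lipschitz estimate for the intensity kernel. Writing
\begin{equation*}
(\Delta_{u_k^\delta}u_k - \Delta_{u_k}u_k)(a) = \sum_b g(a,b)\bigl[h_{u_k^\delta}(a,b) - h_{u_k}(a,b)\bigr]\bigl(u_k(a) - u_k(b)\bigr),
\end{equation*}
already separates the geometric factor $g(a,b)$, so that only the difference $h_{u_k^\delta}(a,b) - h_{u_k}(a,b)$ remains to be controlled.

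The key analytic ingredient will be the Lipschitz continuity of the Gaussian kernel. Since $h_{\mathbf{x}}(a,b) = \phi(\mathbf{x}(a)-\mathbf{x}(b))$ with $\phi(t) = e^{-t^2/\sigma}$ having a uniformly bounded derivative on $\mathbb{R}$, the map $\phi$ is globally Lipschitz; denoting its constant by $L_h$ and combining $|\phi(t_1)-\phi(t_2)|\leq L_h|t_1-t_2|$ with the triangle inequality applied to $\mathbf{x}(a)-\mathbf{x}(b)$ yields
\begin{equation*}
|h_{u_k^\delta}(a,b)-h_{u_k}(a,b)| \leq L_h\bigl(|e(a)|+|e(b)|\bigr), \qquad e:= u_k^\delta - u_k.
\end{equation*}

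Equipped with this pointwise bound, I will split the quantity of interest into a weight part $(W_{u_k^\delta}-W_{u_k})u_k(a) = \sum_b g(a,b)\Phi(a,b)u_k(b)$ and a degree part $(D_{u_k^\delta}-D_{u_k})u_k(a) = \bigl(\sum_b g(a,b)\Phi(a,b)\bigr)u_k(a)$, where $\Phi(a,b):=h_{u_k^\delta}(a,b)-h_{u_k}(a,b)$. For the weight part, inserting the Lipschitz bound and applying $\|e\|_1 \leq \sqrt{|S|}\|e\|_2$ together with the Cauchy--Schwarz estimate $\sum_b|e(b)||u_k(b)|\leq \|e\|_2\|u_k\|_2$ will produce a contribution of order $2L_h \max_{a,b}g(a,b)\sqrt{|S|}\,\|u_k\|_2\|e\|_2$, which matches the first summand in $\mathcal{H}$. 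For the degree part, I will estimate $|F_a|:=|\sum_b g(a,b)\Phi(a,b)|$ by separating the $|e(a)|$-term, controlled by $\max_a\max_b g(a,b)$ and $\|e\|_\infty\leq\|e\|_2$, from the $\sum_b g(a,b)|e(b)|$-term, controlled by $\max_{a,b} g(a,b)$ together with $\|e\|_1\leq \sqrt{|S|}\|e\|_2$; this should furnish the remaining summand.

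The principal technical obstacle will be the careful bookkeeping required to prevent a spurious $|S|$ factor from entering the bound, which most easily arises in the degree piece where a crude estimate of $F_a$ would inflate the constant. The fix is to never invoke $\|e\|_1\leq|S|\|e\|_\infty$, but rather to use $\|e\|_1\leq\sqrt{|S|}\|e\|_2$ and $\|e\|_\infty\leq\|e\|_2$ only where each is sharp, and to maintain the two summands of $(|e(a)|+|e(b)|)$ as distinct contributions throughout. Once both pieces are assembled and added via the triangle inequality, the asserted constant $\mathcal{H}$ will emerge.
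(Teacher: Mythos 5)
Your route differs from the paper's in a minor but genuine way: the paper first passes to the operator norm via $\|\Delta_{u_k^\delta}u_k-\Delta_{u_k}u_k\|_2\le\|\Delta_{u_k^\delta}-\Delta_{u_k}\|_s\|u_k\|_2$, then bounds $\|W_{u_k^\delta}-W_{u_k}\|_s$ by the Frobenius norm and $\|D_{u_k^\delta}-D_{u_k}\|_s$ by its largest diagonal entry, whereas you estimate the action on $u_k$ entrywise. Your weight part is sound and does reproduce the first summand $2L_h\max_{a,b}g(a,b)\sqrt{|S|}$ (though the $\sqrt{|S|}$ there actually enters through $\|u_k\|_1\le\sqrt{|S|}\|u_k\|_2$ and through taking the $\ell^2$ norm over $a$ of a term independent of $a$, not through $\|e\|_1\le\sqrt{|S|}\|e\|_2$ as you state).

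The gap is in the degree part. From $|F_a|\le L_h\bigl(|e(a)|\sum_b g(a,b)+\sum_b g(a,b)|e(b)|\bigr)$, your first piece contains $\sum_b g(a,b)$, the weighted degree of $a$, which is controlled by $\max_b g(a,b)$ only when $a$ has a single neighbour; in general it carries a factor equal to the neighbourhood cardinality (up to $|S|$). Your second piece, estimated via $\|e\|_1\le\sqrt{|S|}\|e\|_2$, contributes $L_h\max_{a,b}g(a,b)\sqrt{|S|}\,\|e\|_2$, which again exceeds the asserted second summand $2L_h\max_a\max_b g(a,b)$, where no $\sqrt{|S|}$ appears. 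So the bookkeeping you describe does not close to the stated constant $\mathcal{H}$: the $D$-contribution comes out inflated by the maximal vertex degree and by $\sqrt{|S|}$. In fairness, the paper's own proof makes the same leap --- it asserts $\sum_{b\in S}(|e(a)|+|e(b)|)\le 2\|e\|_2$, which fails unless $|S|=1$ --- so the constant in the lemma is itself doubtful; but as a proof of the lemma as stated, your degree estimate needs either a corrected constant (e.g.\ involving $\max_a\sum_b g(a,b)$ and $\sqrt{|S|}$) or an argument explaining why the vertex degrees do not enter.
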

\begin{proof}
From triangular inequality and submultiplicativity of the operator norm, we have 
\[\|\Delta_{u_k^\delta}u_k - \Delta_{u_k}u_k\|_2 \leq \|\Delta_{u_k^\delta} - \Delta_{u_k}\|_s \|u_k\|_2,\]
where $\|\cdot\|_s$ denotes the spectral norm (i.e., matrix operator norm induced by the Euclidean vector norm). We recall that, for any vector $u$, the graph Laplacian is given by 
\[
\Delta_{u} = D_{u} - W_{u},
\]
where the weight matrix \( W_{u} \) and the diagonal degree matrix \(D_{u}\) are same as defined in subsection~\ref{subsec: Graph Theory}. Using this result, it can be noted that
\begin{equation}\label{eqn: delta split in D and W}
  \| \Delta_{u_k^\delta} - \Delta_{u_k} \|_s \leq \| D_{u_k^\delta} - D_{u_k} \|_s + \| W_{u_k^\delta} - W_{u_k} \|_s.  
\end{equation}
\noindent
Next, we individually estimate both the terms of right hand side of the above inequality, starting with the second term. For that, the entry-wise difference is 
\[
|w_{u_k}(a, b) - w_{u_k^\delta}(a, b)|= g(a, b)|h_{u_k}(a, b)-h_{u_k^\delta}(a, b) |,
\]
where \( h_{u} \) is a Lipchitz continuous Gaussian kernel function for any vector $u$ with Lipchitz constant $L_h$. Hence, we have that
\begin{align*}
    |w_{u_k}(a, b) - w_{u_k^\delta}(a, b)| &= g(a, b)L_h \left||u_k(a) -u_k(b)| - |{u_k^\delta}(a) -  {u_k^\delta}(b)| \right| \\
    & \leq g(a, b)L_h  \left|(u_k(a) -u_k(b)) - ({u_k^\delta}(a) -  {u_k^\delta}(b))\right| \\
    & \leq g(a, b)L_h  \left|(u_k(a) -u_k^\delta(a)) - (u_k(b) -  {u_k^\delta}(b))\right|\\
    &\leq g(a, b)L_h  \left(\left|u_k(a) -u_k^\delta(a)\right| + \left|u_k(b) -  {u_k^\delta}(b)\right|\right).
\end{align*}
Thus, we arrive at
\[\| W_{u_k^\delta} - W_{u_k} \|_F^2 \leq (\max_{a,b}{g(a, b)}L_h)^2 \sum_{a,b \in S} \left(|u_k(a) -u_k^\delta(a)| + |u_k(b) -  {u_k^\delta}(b)|\right)^2,\] 
where $\|\cdot\|_F$ denotes the Frobenius norm and $S$ denotes the set associated with the edge-weight function $g(a, b).$  Further, from \cref{eqn:degree}, we have
\begin{align*}
&\|W_{u_k^\delta} - W_{u_k}\|_F^2\\
&= 2(\max_{a,b}{g(a, b)}L_h)^2 \Bigl(
\sum_{a\in S} \deg(a)\,|u_k(a) -u_k^\delta(a)|^2 
+ \sum_{b\in S} \deg(b)\,|u_k(b) -  {u_k^\delta}(b)|^2
\Bigr) \\
&\le 4(\max_{a,b}{g(a, b)}L_h)^2 N \sum_{a\in S} |u_k(a) -u_k^\delta(a)|^2 
= 4(\max_{a,b}{g(a, b)}L_h)^2 N\,\|u_k^\delta - u_k\|_2^2.
\end{align*}
After solving the above estimate and using the fact that \(\|\cdot\|_s \leq \|\cdot\|_F\), we get
\begin{equation}\label{eqn: Wuk estimate}
\| W_{u_k^\delta} - W_{u_k} \|_s \leq  \| W_{u_k^\delta} - W_{u_k} \|_F  \leq 2 \max_{a,b}{g(a, b)}L_h\sqrt{N} \|u_k^\delta - u_k\|_2.
\end{equation}
Similarly, we estimate the first term of \eqref{eqn: delta split in D and W} in which each diagonal entry satisfies
\begin{align*}
 |d_{u_k}(a)-d_{u_k^\delta}(a)| &= \left| \sum_{b \in S} (w_{u_k}(a, b) - w_{u_k^\delta}(a, b)) \right| \leq \sum_{b \in S} |w_{u_k}(a, b) - w_{u_k^\delta}(a, b)| \\
 &\leq \max_{b}{g(a, b)}L_h \sum_{b \in S}  \left(|u_k(a) -u_k^\delta(a)| + |u_k(b) -  {u_k^\delta}(b)| \right) \\
 & \leq 2 \max_{b}{g(a, b)}L_h   \|u_k -u_k^\delta\|_2.
\end{align*}
This yields that
\begin{equation}\label{eqn: Duk estimate}
    \| D_{u_k^\delta} - D_{u_k} \|_s = \max_{a} |d_{u_k}(a)-d_{u_k^\delta}(a)| \leq 2 \max_{a}\max_{b}{g(a, b)}L_h  
    \|u_k -u_k^\delta\|_2. 
\end{equation}
Plugging (\ref{eqn: Wuk estimate}) and (\ref{eqn: Duk estimate}) in (\ref{eqn: delta split in D and W}) to arrive at
\begin{equation}\label{eqn:last_eqn}
    \|  \Delta_{u_k^\delta} - \Delta_{u_k} \|_s \leq \mathcal{H}  \|u_k^\delta - u_k\|_2,
\end{equation}
where $\mathcal{H}$ is the same constant as defined in the statement. Thus, the result holds.
\end{proof}
\noindent
Next, we show that  the Lemma~\ref{lemma: contraction of Delta}, in conjunction with \Cref{hypothesis stability}, establishes the stability required to link Algorithm~\ref{alg:buildtree} with Algorithm~\ref{alg:exactdata}. This is formalized in the following result.

 \begin{lemma}\label{lemma: stability}
 Let Assumptions~$\ref{hypo: bounded}, \ref{hypo: soln existence}$ and $\ref{hypothesis stability}$ hold. Then for each fixed integer $k \geq 0$, there holds 
 \begin{equation}
    u_k^\delta \to u_k \quad \text{as} \quad \delta \to 0,
 \end{equation} 
 where $u_k^\delta$ and $u_k$ are same as in \Cref{alg:buildtree} and \Cref{alg:exactdata}.
 \end{lemma}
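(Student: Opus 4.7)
\medskip

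\noindent\textbf{Proof proposal.} My plan is to proceed by induction on $k$, using the update formula $u_{k+1}^\delta = u_k^\delta - \alpha_k^\delta A^*(Au_k^\delta - v^\delta) - \beta_k^\delta \Delta_{u_k^\delta} u_k^\delta$ (and its exact-data analogue) to pass stability from step $k$ to step $k+1$. The base case $k=0$ is immediate: $u_0^\delta = \Psi(v^\delta) \to \Psi(v) = u_0$ by \Cref{hypothesis stability}.

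For the inductive step, assume $u_k^\delta \to u_k$ as $\delta \to 0$. It suffices to show the convergence of each of the three terms in the update. The first term is handled by the induction hypothesis. For the second term, continuity of $A$ together with $v^\delta \to v$ gives $A^*(Au_k^\delta - v^\delta) \to A^*(Au_k - v)$, so the work reduces to showing $\alpha_k^\delta A^*(Au_k^\delta - v^\delta) \to \alpha_k A^*(Au_k - v)$. I would split into two cases: if $\|A^*(Au_k - v)\| \neq 0$, then for sufficiently small $\delta$ the denominator in \eqref{alpha} is bounded away from zero, and $\alpha_k^\delta \to \alpha_k$ by continuity of $\min(\cdot,\eta_1)$; if $\|A^*(Au_k - v)\| = 0$, then $A^*(Au_k^\delta - v^\delta) \to 0$ and $\alpha_k^\delta \in [0,\eta_1]$ is bounded, which suffices to conclude $\alpha_k^\delta A^*(Au_k^\delta - v^\delta) \to 0 = \alpha_k A^*(Au_k - v)$.

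For the third term, I would first show $\Delta_{u_k^\delta} u_k^\delta \to \Delta_{u_k} u_k$ via the splitting
\begin{equation*}
\|\Delta_{u_k^\delta} u_k^\delta - \Delta_{u_k} u_k\| \leq \|\Delta_{u_k^\delta}\|_s \, \|u_k^\delta - u_k\| + \|\Delta_{u_k^\delta} u_k - \Delta_{u_k} u_k\|,
\end{equation*}
where the first summand vanishes because $\|\Delta_{u_k^\delta}\|_s$ is uniformly bounded (the Gaussian kernel and $g$ are bounded, so all entries of $D_{u_k^\delta}-W_{u_k^\delta}$ are bounded) and the inductive hypothesis gives $u_k^\delta \to u_k$, while the second summand is controlled by \Cref{lemma: contraction of Delta} as $\mathcal{H}\|u_k\|\,\|u_k^\delta - u_k\| \to 0$. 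Then convergence of $\beta_k^\delta \Delta_{u_k^\delta} u_k^\delta$ is obtained by another case split on whether $\|\Delta_{u_k} u_k\| = 0$: if it is nonzero, $\beta_k^\delta \to \beta_k$ by continuity of the $\min$, while if it vanishes, the uniform bound $\beta_k^\delta \leq \nu_2$ together with $\Delta_{u_k^\delta} u_k^\delta \to 0$ gives $\beta_k^\delta \Delta_{u_k^\delta} u_k^\delta \to 0 = \beta_k \Delta_{u_k} u_k$.

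The main obstacle, and the subtle point of the argument, is precisely this case analysis at the degenerate values: the step-size formulas \eqref{alpha} and \eqref{beta} are defined piecewise, with denominators that may vanish in the limit $\delta \to 0$, so one cannot naively invoke continuity. The trick is to exploit the built-in upper bounds $\alpha_k^\delta \leq \eta_1$ and $\beta_k^\delta \leq \nu_2$ together with the observation that the objects multiplying these step sizes themselves tend to zero in the degenerate case, which rescues the convergence. Once all three terms are controlled, the desired conclusion $u_{k+1}^\delta \to u_{k+1}$ follows directly from the update rules, closing the induction.
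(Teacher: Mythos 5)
Your proposal is correct and follows essentially the same route as the paper's proof: induction on $k$, term-by-term convergence of the update using the induction hypothesis, continuity of the step-size formulas, and \Cref{lemma: contraction of Delta} to control the graph Laplacian term. The only difference is that you spell out the degenerate cases where the denominators in \eqref{alpha} and \eqref{beta} vanish in the limit (using the built-in upper bounds $\eta_1$ and $\nu_2$), which the paper dismisses as trivial; your treatment is a welcome clarification but not a different argument.
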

\begin{proof}
   We establish the result by induction. For the base case \( k = 0 \), note that \( u_0^\delta = \Psi(v^\delta) \) and \( u_0 = \Psi(v) \). Therefore, the claim holds trivially by \Cref{hypothesis stability}. Next, let us assume that the statement holds for all \( 0 \leq j \leq k \). We aim to show that it also holds for \( j = k+1 \), i.e., \( u_{k+1}^\delta \to u_{k+1} \) as \( \delta \to 0 \). 
    From (\ref{main iterative schrme}), we have 
    \begin{align}\label{x_k+1 to x_k}
    \nonumber u_{k+1}^\delta - u_{k+1} &= (u_k^\delta - u_k) -  \left( \alpha_k^\delta A^*(Au_k^\delta - v^\delta) - \alpha_k A^*(Au_k - v) \right)\\
     & \quad -(\beta_k^\delta \Delta_{u_k^\delta}u_k^\delta - \beta_k \Delta_{u_k}u_k).
    \end{align}
We begin by establishing that  
\[
\alpha_k^\delta A^*(Au_k^\delta - v^\delta) \to \alpha_k A^*(Au_k - v) \quad \text{as } \delta \to 0.
\]
To this end, we utilize that \( u_k^\delta \to u_k \) and \( v^\delta \to v \) as \( \delta \to 0 \). When $Au_k - v = 0$, using $0 \leq \alpha_k^\delta \leq \eta_1$, it follows that
\[
   \left\| \alpha_k^\delta A^*(Au_k^\delta - v^\delta) - \alpha_k A^*(Au_k - v) \right\|
   = \left\| \alpha_k^\delta A^*(Au_k^\delta - v^\delta)\right\|
   \leq \eta_1 \left\| A^*(Au_k^\delta - v^\delta)\right\|.
\]
Since $Au_k^\delta \to Au_k$ and $v^\delta \to v$ as $\delta \to 0$, we obtain
\[
   \eta_1\left\|  A^*(Au_k^\delta - v^\delta)\right\| 
   \to \eta_1 \left\| A^*(Au_k - v)\right\| = 0.
\]
On the other hand, when $Au_k - v \neq 0$, we have
\[
   \langle A^*(Au_k - v),\, u_k - u^\dagger \rangle
   = \langle Au_k - v,\, Au_k - v \rangle
   = \|Au_k - v\|^2 > 0,
\]
which implies that $A^*(Au_k - v) \neq 0$ and we get 
\[
\alpha_k^\delta = \min \left\{ \frac{\eta_0 \|Au_k^\delta - v^\delta\|^2}{\|A^*(Au_k^\delta - v^\delta)\|^2}, \eta_1 \right\}
\to
\min \left\{ \frac{\eta_0 \|Au_k - v\|^2}{\|A^*(Au_k - v)\|^2}, \eta_1 \right\} = \alpha_k.
\]
 Consequently, we obtain
\begin{align}\label{Eqn: (3.26)}
\left\| \alpha_k^\delta A^*(Au_k^\delta - v^\delta) - \alpha_k A^*(Au_k - v) \right\|
& \leq \alpha_k^\delta \left\| A^*(Au_k^\delta - v^\delta) - A^*(Au_k - v) \right\| \nonumber \\
& \quad + \left| \alpha_k^\delta - \alpha_k \right| \left\| A^*(Au_k - v) \right\| \to 0 \text{ as }  \delta \to 0.
\end{align}
In a similar vein, assume that \( \|\Delta_{u_k^\delta} u_k^\delta\| \neq 0 \). Then, by the definitions of \( \beta_k^\delta \) and \( \beta_k \), we obtain
\[
\beta_k^\delta = 
\min \left\{ \frac{\nu_0 \|A u_k^\delta - v^\delta\|^2}{\|\Delta_{u_k^\delta} u_k^\delta\|}, \frac{\nu_1}{\|\Delta_{u_k^\delta} u_k^\delta\|}, \nu_2 \right\} \to 
\min \left\{ \frac{\nu_0 \|A u_k - v\|^2}{\|\Delta_{u_k} u_k\|}, \frac{\nu_1}{\|\Delta_{u_k} u_k\|}, \nu_2 \right\}
\]
as  $\delta \to 0.$ This convergence follows from the induction hypothesis and Lemma~\ref{lemma: contraction of Delta}, which ensures that\( \|\Delta_{u_k^\delta} u_k^\delta\| \to \|\Delta_{u_k} u_k\| \) as \( \delta \to 0 \). Thus, we have
\begin{align} \label{eqn: (3.27)}
  \|\beta_k^\delta \Delta_{u_k^\delta}u_k^\delta &- \beta_k \Delta_{u_k}u_k\| \leq  \|\beta_k^\delta \Delta_{u_k^\delta}u_k^\delta - \beta_k^\delta \Delta_{u_k^\delta}u_k\| + \|\beta_k^\delta \Delta_{u_k^\delta}u_k - \beta_k \Delta_{u_k}u_k\| \nonumber \\ 
&\quad \leq  \|\beta_k^\delta \Delta_{u_k^\delta}(u_k^\delta - u_k)\| +\left(|\beta_k^\delta - \beta_k|\| \Delta_{u_k^\delta}\| +  \beta_k \|\Delta_{u_k^\delta} - \Delta_{u_k}\|\right)\|u_k\|  \nonumber \\ 
&\quad \leq  \|\beta_k^\delta \Delta_{u_k^\delta}\|\|u_k^\delta - u_k\| +\left(|\beta_k^\delta - \beta_k|\| \Delta_{u_k^\delta}\| +  \beta_k \|\Delta_{u_k^\delta} - \Delta_{u_k}\|\right)\|u_k\|,
\end{align} 
where the  boundedness of $\|\Delta_{u_k^\delta}\|$ follows directly from \cref{eqn:last_eqn} by setting $u_k = 0$, which yields $\| \Delta_{u_k^\delta}\| \leq \| \Delta_0\| + \mathcal{H}  \|u_k^\delta\|.$
Hence, applying Lemma~\ref{lemma: contraction of Delta} to the above expression together with the induction hypothesis, we obtain
\[
\beta_k^\delta \Delta_{u_k^\delta} u_k^\delta \to \beta_k \Delta_{u_k} u_k \quad \text{as } \delta \to 0.
\]
Consequently, it follows from \eqref{x_k+1 to x_k}, \eqref{Eqn: (3.26)}, \eqref{eqn: (3.27)} and the induction hypothesis that
\(
u_{k+1}^\delta \to u_{k+1} \text{ as } \delta \to 0.
\) This completes the proof.
\end{proof}
\subsection{Convergence for noisy data}\label{subsec: nonnoisy} In this subsection, our goal is to establish the regularization property of \texttt{IRMGL+\(\Psi\)} method. It is worth emphasizing that, in general, convergence of the method with noisy data cannot be guaranteed, as the perturbed data \( v^\delta \) may not belong to the range of the operator \( A \), i.e., \( v^\delta \notin \mathcal{R}(A) \). 
\begin{theorem}\label{thm: non noisy}
   Under the assumptions of \Cref{lemma: stability}, if $k_\delta$ is the stopping index as defined in \eqref{eqn: discrepancy} for the method defined in  \Cref{alg:buildtree}, then there exist a solution $u^\dagger$ of $(\ref{Model eqn})$ such that 
    \[\lim_{\delta \to 0} \|u_{k_\delta}^\delta - u^\dagger\| = 0.\]
    \begin{proof}
        Let $u^\dagger$ be the solution of (\ref{Model eqn}) determined in Theorem \ref{Convergence for exact data} such that \(\|u_k - u^\dagger\| \to 0\) as \(\delta \to 0\), where $\{u_k\}_{k \geq 0}$ denotes the sequence of iterates determined by the Algorithm~\ref{alg:exactdata} using the exact data. We assume that there is a sequence $\{\delta_n\}$ such that $\delta_n \to 0$ as $n \to \infty,$ and let $v^{\delta_n}$ be the associated noisy data sequence. The corresponding stopping index for each pair $(\delta_n, v^{\delta_n}),$ as established by (\ref{eqn: discrepancy}), is represented by $k_n:=k_{\delta_n}$.
      We shall examine two scenarios in order to demonstrate the desired outcome.
      
      \textit{Case-I:} Consider that for some finite integer $\Bar{k}$, $k_n \to \Bar{k}$ as $n \to \infty$. Then, for all large $n$, we may assume that $k_n = \Bar{k}$. Consequently, using the definition of $k_n$, we obtain
      \begin{equation}\label{discrepancy in last result}
       \|Au_{\Bar{k}}^{\delta_n} - v^{\delta_n}\| \leq \tau \delta_n.   
      \end{equation}
       Taking the limit as \( n \to \infty \) in \eqref{discrepancy in last result} and applying Lemma \ref{lemma: stability}, we obtain \( A u_{\bar{k}} = v \). Then, by invoking Theorem \ref{Convergence for exact data}, it follows that \( u_{\bar{k}} = u^\dagger \). Consequently, we conclude that \( u_{k_n}^{\delta_n} \to u^\dagger \) as \( n \to \infty \).

    \textit{Case-II:} Next, assume that there exist a sequence $\{\delta_n\}$ with  $\delta_n \to 0$ such that $k_n:= \hat{k}_{\delta_n} \to \infty$ as $n \to \infty$, and let $v^{\delta_n}$ be the corresponding sequence of noisy data. Let $k$ be sufficiently large fixed integer. 
        From Lemma~\ref{lemma: monotonicity for exact data}, it follows that there exist an $\varepsilon >0$ such that $\|u_k - u^\dagger\| \leq \frac{\varepsilon}{2}$. Since for fixed $k,$ using Lemma~\ref{lemma: stability}, there is a $n=(k, \varepsilon)$ such that $\|u_k^{\delta_n} - u_k\|\leq \frac{\varepsilon}{2}$ for all $n > n(k, \varepsilon).$
        Then, for sufficiently large $n$, such that $k_n > k$, we conclude from Lemma~\ref{Lemma: Monotonicity} that 
        \begin{align*}
            \|u_{k_n}^{\delta_n}-u^\dagger\| &\leq \|u_{k_n -1}^{\delta_n} - u^\dagger\| \leq \cdots \leq \|u_k^{\delta_n} - u^\dagger\|\\
            & \leq \|u_{k}^{\delta_n} - u_k\| + \|u_k - u^\dagger\| \leq \varepsilon.
        \end{align*}
      Consequently, the outcome follows.
    \end{proof}
\end{theorem}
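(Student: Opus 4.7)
The plan is to fix an arbitrary sequence $\delta_n \to 0$ with associated noisy data $v^{\delta_n}$ and discrepancy stopping indices $k_n := k_{\delta_n}$, and to prove $u_{k_n}^{\delta_n} \to u^\dagger$, where $u^\dagger$ denotes the solution supplied by \Cref{Convergence for exact data} as the limit of the exact-data iterates $\{u_k\}$. Since every $\mathbb{N}$-valued sequence admits either a bounded subsequence or a subsequence diverging to $\infty$, I would split the argument into two exhaustive cases and then invoke the standard subsequence principle to lift convergence along every subsequence to convergence of the full sequence.

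In the bounded case, after passing to a subsequence one may assume $k_n \equiv \bar{k}$ is constant. The discrepancy principle yields $\|A u_{\bar{k}}^{\delta_n} - v^{\delta_n}\| \leq \tau \delta_n$, and \Cref{lemma: stability} applied at the fixed index $\bar{k}$ gives $u_{\bar{k}}^{\delta_n} \to u_{\bar{k}}$, where $u_{\bar{k}}$ is the corresponding exact-data iterate produced by \Cref{alg:exactdata}. Passing to the limit shows $A u_{\bar{k}} = v$, so $u_{\bar{k}}$ is itself a solution of \eqref{Model eqn}. The crucial step is identifying $u_{\bar{k}}$ with $u^\dagger$: because $u_{\bar{k}}$ qualifies as a valid ``solution'' role in \Cref{lemma: monotonicity for exact data}, the resulting monotonicity $\|u_{k+1} - u_{\bar{k}}\| \leq \|u_k - u_{\bar{k}}\|$ evaluated at $k = \bar{k}$ forces $u_k = u_{\bar{k}}$ for all $k \geq \bar{k}$. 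Hence $u^\dagger = \lim_k u_k = u_{\bar{k}}$, and $u_{k_n}^{\delta_n} \to u^\dagger$ along the subsequence.

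For the unbounded case $k_n \to \infty$, I would combine \Cref{Convergence for exact data}, \Cref{lemma: stability}, and the monotonicity of \Cref{Lemma: Monotonicity} via a triangle-inequality argument. Given $\varepsilon > 0$, \Cref{Convergence for exact data} supplies $k^*$ with $\|u_{k^*} - u^\dagger\| \leq \varepsilon/2$; then \Cref{lemma: stability} applied at the fixed index $k^*$ yields $n_0$ such that $\|u_{k^*}^{\delta_n} - u_{k^*}\| \leq \varepsilon/2$ for all $n \geq n_0$. For $n$ sufficiently large so that also $k_n > k^*$, the monotonicity assertion of \Cref{Lemma: Monotonicity} gives
\[
\|u_{k_n}^{\delta_n} - u^\dagger\| \leq \|u_{k^*}^{\delta_n} - u^\dagger\| \leq \|u_{k^*}^{\delta_n} - u_{k^*}\| + \|u_{k^*} - u^\dagger\| \leq \varepsilon,
\]
and arbitrariness of $\varepsilon$ closes the argument.

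The main obstacle I anticipate is the identification step in the bounded case. Because the update rule retains a nonvanishing data-driven contribution $\beta_k \Delta_{u_k} u_k$ even when the residual $A u_k - v$ vanishes, the exact-data iteration is not obviously stationary at $u_{\bar{k}}$, so one cannot conclude $u_{\bar{k}} = u^\dagger$ by a direct fixed-point argument. The delicate move is to repurpose $u_{\bar{k}}$ as the ``solution'' in the monotonicity estimate of \Cref{lemma: monotonicity for exact data}, which forces the exact-data sequence to be constant from index $\bar{k}$ onward and collapses its limit to $u_{\bar{k}}$. The unbounded case proceeds almost mechanically once the three lemmas are in place, so the entire proof hinges on this identification.
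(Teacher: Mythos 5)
Your proof is correct and follows essentially the same route as the paper: the same split into bounded versus unbounded stopping indices, the same use of \Cref{lemma: stability} plus the discrepancy inequality in the bounded case, and the same triangle-inequality argument combining \Cref{Convergence for exact data}, \Cref{lemma: stability}, and the monotonicity of \Cref{Lemma: Monotonicity} in the unbounded case.

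The one place where you genuinely go beyond the paper is the identification $u_{\bar{k}} = u^\dagger$ in the bounded case. The paper simply asserts this ``by invoking \Cref{Convergence for exact data},'' which only gives $u_k \to u^\dagger$ and does not by itself pin down a finite iterate; your move of re-running the monotonicity estimate of \Cref{lemma: monotonicity for exact data} with $u_{\bar{k}}$ playing the role of the solution (legitimate at step $\bar{k}$ since $Au_{\bar{k}} = v$ and $\|u_{\bar{k}} - u_{\bar{k}}\| = 0 \leq \wp$, so the one-step bound applies and forces $u_{\bar{k}+1} = u_{\bar{k}}$, whence stationarity by induction) closes this gap cleanly. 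Worth noting, though, that the obstacle you flag is not actually present: by the definition of $\beta_k$ in \Cref{alg:exactdata} (the analogue of \eqref{beta}), the weight is the minimum of several nonnegative quantities one of which is proportional to $\|Au_k - v\|^2$, so $Au_{\bar{k}} = v$ forces $\beta_{\bar{k}} = 0$ as well as $A^*(Au_{\bar{k}} - v) = 0$, and the update rule gives $u_{\bar{k}+1} = u_{\bar{k}}$ directly. Either way the identification holds, and your version is the more robust of the two since it would survive a choice of $\beta_k$ that does not vanish with the residual. The subsequence-principle framing you add at the outset is also a sound (and arguably necessary) formalization of what the paper leaves implicit when it treats the two cases separately.
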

\section{Numerical experiments and discussion}
\label{sec: numerical}
In this section, we first provide a comparative analysis with existing methods to demonstrate the efficiency, and robustness of our approach. Subsequently,  we present the numerical simulations for two-dimensional (2D) computed tomography (CT) and image deblurring applications. We describe the experimental setup, problem setting, and a number of numerical tests that were performed to assess the effectiveness of the \texttt{IRMGL+\(\Psi\)} method.
\subsection{Comparison Analysis}\label{appendix}
To provide a comprehensive comparison with our proposed \texttt{IRMGL+\(\Psi\)} scheme, this section details the formulations of the \texttt{GraphLa+\(\Psi\)}~\cite{bianchi2025data} and \texttt{it-graphLa\(\Psi\)}~\cite{bianchi2024improved} methods. Both approaches utilize graph Laplacian regularization within a variational framework, but they differ in their handling of the updates of regularization term.

\subsubsection{The \texttt{GraphLa+\(\Psi\)} Method}
\label{app:graphla_psi}
The method, introduced in \cite{bianchi2025data}, is a variational regularization approach for solving inverse problems. It incorporates a graph Laplacian as a static regularization term, assuming a fixed underlying data structure. The core idea is to find the approximated solution  $u_\beta$ that minimizes a functional combining a data fidelity term and a Tikhonov-like regularization term weighted by a graph Laplacian. The solution $u_\beta$ in this method reads
\begin{equation}\label{GRaphLA}
    u_{\beta} \in \arg\min_{u \in \mathcal{U}} \left \{\frac{1}{2} \|Au - v^\delta\|^2_2 + \beta \|\Delta_{\Psi_\theta^\delta} u\|_1 \right \},
\end{equation}
where $\Delta_{\Psi_\theta^\delta}$ is a graph Laplacian constructed from an initial reconstruction $\Psi_\theta^\delta$. In this formulation, the graph Laplacian $\Delta$ is fixed after its initial construction based on a preliminary estimate $\Psi_\theta^\delta := \Psi_\theta (v^\delta)$ (where $\Psi$ is a standard reconstruction operator, e.g. FBP, Tikhonov, TV).

\begin{remark}
 The choice of $\Psi$ in this method influences the initial graph structure, but the graph Laplacian itself does not evolve during the solution process.   
\end{remark}

\subsubsection{The \texttt{it-graphLa\(\Psi\)} Method}
\label{app:itgraphla_psi}
The method, proposed in \cite{bianchi2024improved} specifically for acoustic impedance inversion, extends the concept of graph Laplacian regularization by introducing an iterated update of the regularization term. Unlike \texttt{GraphLa+\(\Psi\)}, this method allows the graph Laplacian to evolve alongside the solution. The problem is typically framed as a sequence of minimization problems:

\noindent For each iteration $k$ and regularization parameter $\beta_k$, find $u_{k+1}$ by solving
\begin{equation}\label{itGraphLa}
\begin{cases}
    u_{k+1}&= \arg\min_{u \in \mathcal{U}} \left \{\frac{1}{2} \|Au - v^\delta\|^2_2 + \beta_k \|\Delta_{u_k} u\|_1 \right \} \quad \text{ for \(k \geq 1,\)} \\
    u_0^\delta&:= \Psi(v^\delta),  
\end{cases}
\end{equation}
where \(\Psi : \mathcal{V} \to \mathcal{U}\) is the chosen initialization method.

\noindent Although the work in~\cite{bianchi2024improved} presents numerical validation, it does not provide a rigorous theoretical justification. In contrast, our \texttt{IRMGL+$\Psi$} method introduces a fully iterative and computationally efficient framework that eliminates the need for solving complex minimization problems, as required in~\cite{bianchi2024improved, bianchi2025data}. Crucially, our method addresses this theoretical shortcoming by providing rigorous proofs of convergence and stability. Moreover, the incorporation of the discrepancy principle~\eqref{eqn: discrepancy} enhances the practical effectiveness and reliability of the algorithm.

In the following subsection, we show that the standard Tikhonov, Total Variation, and Filtered Back Projection reconstruction methods satisfy \cref{hypothesis stability}.

\subsection{Initial reconstructors}\label{subsec:initial_rec}
For the numerical experiments, we employ four types of initial reconstructors namely, Filtered Back Projection (FBP), Adjoint (Adj), Tikhonov (Tik), and Total Variation (TV).  
Concerning \cref{hypothesis stability}, the continuity (i.e., Lipschitz property) of the initial reconstructor $\Psi$ plays a central role.  
We first provide a precise definition of the initializers utilized in our experiments, followed by a justification of  \cref{hypothesis stability}.
For a detailed discussion of these reconstruction methods, we refer the reader to \cite{bianchi2025data, kak2001principles,  scherzer2009variational}.

 \noindent \textbf{Tikhonov:}
    Consider the standard Tikhonov reconstruction
\[
\Psi_\lambda(v) := (A^\top A + \lambda I)^{-1} A^\top v, \qquad \lambda > 0.
\]
Let the regularization parameter $\lambda = \lambda(\delta, v^\delta)$ be chosen according to the \emph{discrepancy principle} \cite[Equation~(4.57)]{engl1996regularization} then  
 we have  $\lambda_k \to \lambda$ as $k \to \infty$. Define
\[
T_k := (A^\top A + \lambda_k I)^{-1}, \qquad T := (A^\top A + \lambda I)^{-1}.
\]
Then, we have
\[
\|\Psi_{\lambda_k}(v^{\delta_k}) - \Psi_\lambda(v^\delta)\| \le 
\underbrace{\|(T_k - T) A^\top v^{\delta_k}\|}_{I} +
\underbrace{\|T A^\top (v^\delta - v^{\delta_k})\|}_{II}.
\]
The two terms can be estimated as follows. For the first term
$I$,
    \[
    I \le |\lambda - \lambda_k| \, \|T_k\| \, \|T A^\top\| \, \|v^{\delta_k}\| \le 
    |\lambda - \lambda_k| \, \frac{\|v^{\delta_k}\|}{2 \lambda_k \sqrt{\lambda}} \to 0 \quad \text{as } k \to \infty.
    \] 
 Similarly, using the boundedness of $T A^\top$,
    \[
    II \le \|T A^\top\| \, \|v^\delta - v^{\delta_k}\| \le \frac{1}{2 \sqrt{\lambda}} \, \|v^\delta - v^{\delta_k}\| \to 0 \quad \text{as } k \to \infty.
    \]
Hence, the standard Tikhonov reconstructor satisfies \cref{hypothesis stability} under discrepancy principle.


\noindent \textbf{TV:}
    Consider the proximal TV denoiser
    \[
    \Phi_\lambda(v) :=  \mathrm{prox}_{\lambda \mathrm{TV}}(v) = \arg\min_{u \in \mathcal U} \left \{ \frac{1}{2}\|u-v\|^2 + \lambda \, \mathrm{TV}(u)\right \},
    \]
    where $\mathrm{TV}$ is the total variation functional and $\lambda>0$ is a fixed regularization parameter. By standard convex analysis (see \cite{Bauschke2011}), the proximal mapping of any proper, convex, lower-semicontinuous functional is  $1$-Lipschitz. i.e.,
    \[
    \|\Phi_\lambda(v_1)-\Phi_\lambda(v_2)\| \le \|v_1-v_2\| \quad \forall \; v_1, v_2 \in \mathcal{V}.
    \]
 Now, let $B$ be a linear FBP operator with finite operator norm $\|B\|$ and consider the composite map
    \[
    \Psi_{\mathrm{TV}}(v) := \Phi_\lambda(Bv), \qquad v \in \mathcal V.
    \]
    Using the Lipschitz property of $\Phi_\lambda$, we obtain
    \[
    \|\Psi_{\mathrm{TV}}(v_1)-\Psi_{\mathrm{TV}}(v_2)\| = \|\Phi_\lambda(Bv_1)-\Phi_\lambda(Bv_2)\| \le \|B(v_1-v_2)\| \le \|B\|\,\|v_1-v_2\|.
    \]
   Hence $\Psi_{\mathrm{TV}}$ is Lipschitz with constant $\|B\|$.
 Consequently, for any sequence of noisy data $v^\delta \to v$, we can shows that Assumption~\ref{hypothesis stability} is satisfied.

The remaining reconstruction operators, namely FBP and Adjoint, are inherently linear by \cref{hypo: bounded} and therefore satisfy \cref{hypothesis stability}. Consequently, each selected $\Psi$ in our experiments defines a Lipschitz continuous mapping in this setting.

The following subsection presents a numerical validation of the proposed scheme on a 2D computed tomography (CT) problem.

\subsection{X-ray Computed Tomography (CT)}
It is a widely used medical imaging technique that plays a crucial role in diagnosing various conditions such as internal injuries, tumors, hemorrhages, and bone fractures. A typical CT system consists of an X-ray source that rotates around the subject, emitting beams from a fixed number of angles along a circular or arc-shaped trajectory. As these X-ray beams pass through the internal structures of the body, different tissues absorb varying amounts of radiation depending on their density. The attenuated rays are then captured by a detector. See \cite{natterer2001mathematics} for the nice details.

\noindent The set of all such measurements, acquired at multiple projection angles, is referred to as a \emph{sinogram}. In the discrete setting, this process can be modeled by a linear system
\[
A \in \mathbb{R}^{m \times n}, \quad u \in \mathcal{U} := \mathbb{R}^n, \quad v^\delta \in \mathcal{V} := \mathbb{R}^m,
\]
where \( A \) denotes the discretized Radon transform (or forward operator), \( u \) represents the unknown 2D image vectorized into \( n \) pixels (or voxels), and \( v^\delta \) is the vectorized sinogram obtained from noisy measurement data.

\subsubsection{Experimental setup}\label{experimental setup}
The vectorization of the 2D image is typically performed using a lexicographic (1D) ordering of the pixels \( a = (i,j) \), following either column-major or row-major convention. For an image of size \( E \times F \), the total number of pixels is \( n = E \cdot F \), and the image is reshaped into a vector in \( \mathbb{R}^n \).

\noindent In \cite{jonasadler20181442734}, the Operator Discretization Library (ODL), a Python-based framework, is introduced for modeling X-ray CT measurements and reconstruction. In our experiments, we utilize the function \texttt{odl.uniform\_discr} to define the image domain as a uniformly discretized square grid of size \( E \times F \) over the region \([-128, 128]^2 \subset \mathbb{R}^2\), represented with single-precision floating point values.

\noindent For the numerical simulation, we employ the Shepp–Logan phantom image provided by \texttt{skimage.data}. The acquisition setup is based on a parallel-beam geometry with \( m_\theta = 60 \) uniformly distributed projection angles over the interval \( [0, 2\pi) \), constructed using the \texttt{odl.tomo.parallel\_beam\_geometry} routine. The forward operator \( A \in \mathbb{R}^{m \times n} \), which approximates the discrete Radon transform, is implemented via \texttt{odl.tomo.RayTransform}. Here, \( m = m_\theta \cdot d \), where \( d \) denotes the number of detector elements, which is automatically determined by ODL to ensure full coverage of the image domain. Specifically, for \( E = F = 128 \), we have \( d = \lceil \sqrt{2} \cdot 128 \rceil = 363 \), and hence \( m = 60 \cdot 363 = 21780 \), while the number of image pixels is \( n = 128 \cdot 128 = 16384 \) leading to an ill-conditioned linear system. For the parameter selection in the edge-weight function~\eqref{eqn: edge weight fun}, we set \(R = 6\) and \(\sigma = 0.05\) based on empirical considerations. The step size $\alpha_k^\delta$ and $\beta_k^\delta$ is chosen by \cref{alpha} and \cref{beta} with $\eta_0 =0.2, \eta_1 = 0.5, \nu_0 = \nu_1 = 0.05$ and $\tau = 2$. For these choices the constant $C$ as in \cref{assum: On C} remains positive.
\noindent In order to replicate real-world circumstances, we introduce Gaussian noise $\xi$ into the sinogram at an intensity level of $\delta$. In particular, we compute $v^\delta$ as
\[v^\delta = v +\delta_{rel} \| v \| \frac{\xi}{\| \xi\|},\]
where $\delta_{rel} = \|v - v^\delta\| / \|v\|$ is the relative noise level and $\delta = \delta_{rel}\|v\|$ is the noise level.
Similar to the investigation in \cite{bianchi2025data}, which explores the impact of different reconstruction operators $\Psi$ within variational regularization frameworks, our iterative method also incorporates multiple initialization strategies (excluding the NETT-based approaches \cite{evangelista2023rising, ronneberger2015u}). Specifically, we employ FBP, Tik, and TV initializers. The FBP initialization is implemented using \texttt{odl.tomo.fbp.op} with a Hann filter. For the Tikhonov approach, the regularization parameter is fixed at $50$, and for the TV-based initialization, we utilize the \texttt{denoise.tv.chambolle} function from the \texttt{skimage.restoration} module.
Additionally, we introduce $A^*$ (the adjoint operator) as an initializer. This choice is particularly relevant as $A^*$ inherently satisfies \Cref{hypothesis stability}, providing a fundamental, yet non-regularizing, starting point for the reconstruction.

\subsubsection{Reconstruction results}\label{subsec: reconstrution}
The primary objective of this subsection is to evaluate the reconstruction accuracy under varying noise levels using the discrepancy principle~\eqref{eqn: discrepancy} as the stopping criterion. 
 In Fig.~\ref{fig:true shape logan}, we present the true image of Shepp-Logan Phantom and noisy sinogram.
\begin{figure}[htbp] 
    \centering
    \includegraphics[width=0.8\textwidth]{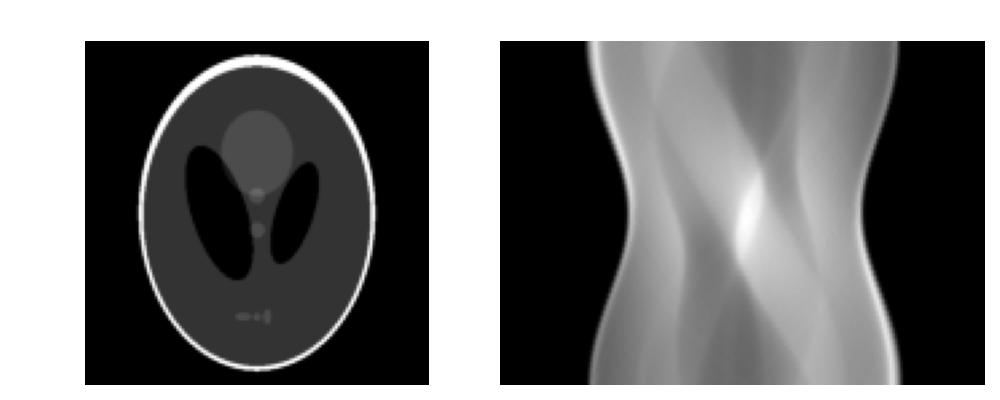} 
    \caption{Left: True phantom, Right: Noisy sinogram \(v^\delta\) with \(\delta = 0.05\). }
    \label{fig:true shape logan}
\end{figure}
\noindent  The maximum number  of iterations is set to \( 1000.\)
The quantitative performance of our reconstructed images is evaluated using relative error, \textit{Peak Signal-to-Noise Ratio (PSNR)} and the \textit{Structural Similarity Index (SSIM)}~\cite{wang2004image}. 
Since  pixel intensities are in $[0, 1]$, the PSNR is given by
\[
\text{PSNR} := 20 \log_{10} \left( \frac{1}{\| u^\dagger - u_{k_\delta}^\delta \|} \right),
\]
and the relative error is defined by
\[
\textbf{RE} := \frac{\|u_{k_\delta}^\delta - u^\dagger\|}{\|u^\dagger\|},
\] 
where $u^\dagger$ denotes the ground truth image and $u_{k_\delta}^\delta$ is the reconstructed image.

\begin{figure} 
    \centering
    \begin{subfigure}[htbp]{\textwidth} 
        \centering
        \includegraphics[width=\textwidth]{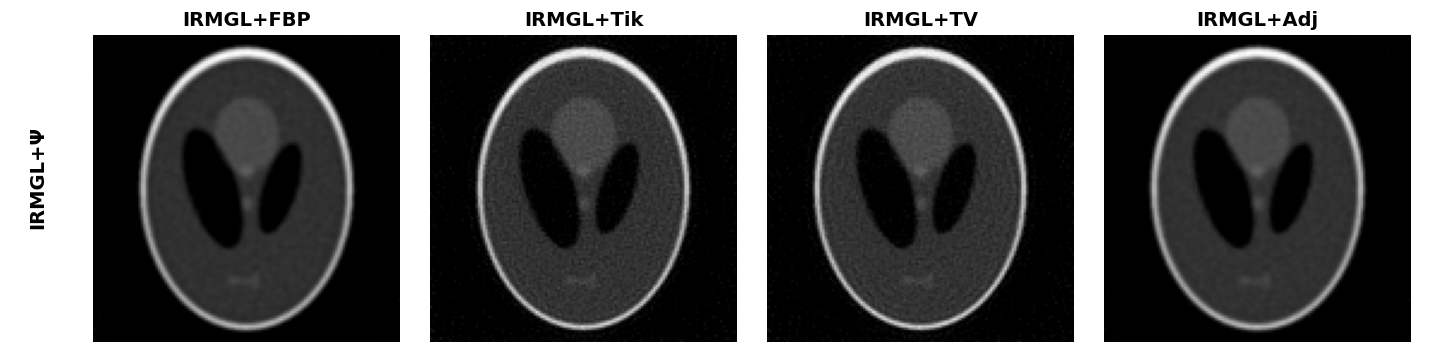}
         \caption{Noise Level: 0.2}
        \end{subfigure}
         \vspace{-0.3cm}
    \begin{subfigure}[htbp]{\textwidth} 
        \centering
        \includegraphics[width=\textwidth]{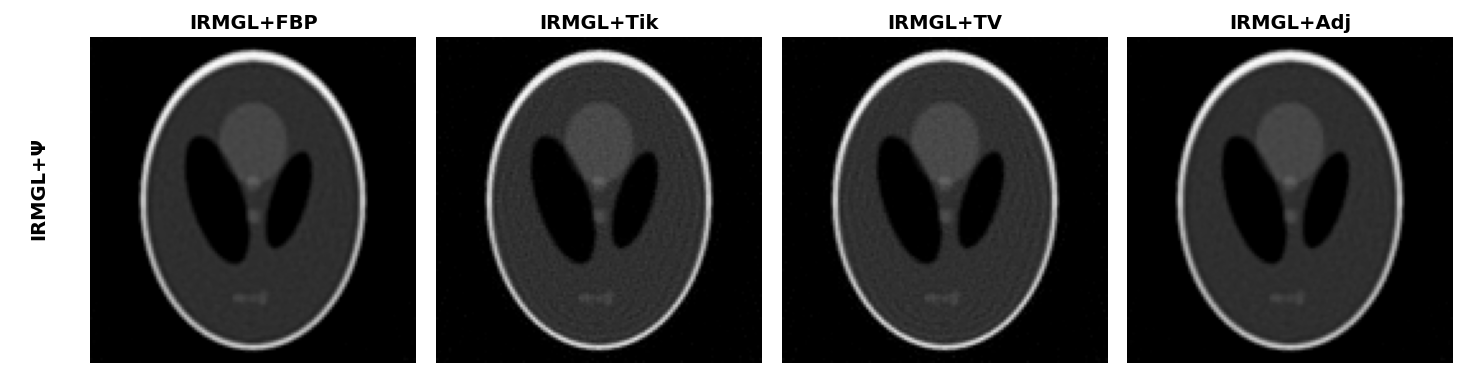}
        \label{fig:rec_0001}
        \caption{Noise Level: 0.1}
    \end{subfigure}
     \vspace{-0.3cm}
    \begin{subfigure}[htbp]{\textwidth}
        \centering
\includegraphics[width=\textwidth,height=0.8\textheight,keepaspectratio]{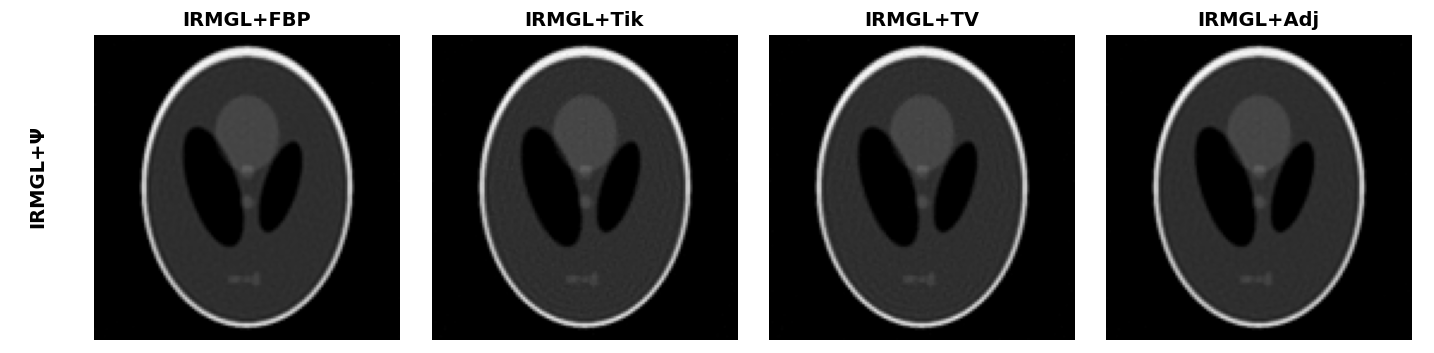}
         \caption{Noise Level: 0.05}
    \end{subfigure}
     \vspace{-0.3cm}
    \begin{subfigure}[htbp]{\textwidth}
        \centering
\includegraphics[width=\textwidth,height=0.8\textheight,keepaspectratio]{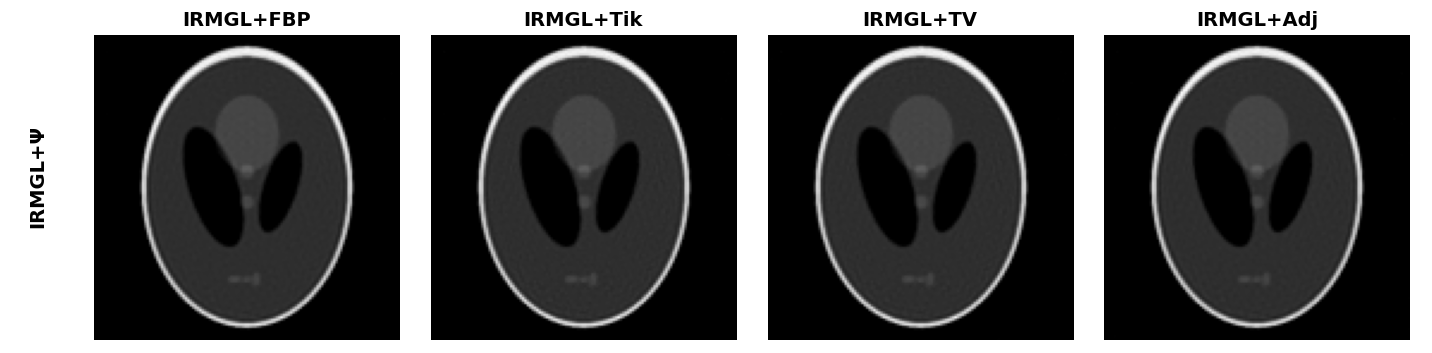}
      \caption{ Noise Level: 0.03}
        \end{subfigure}
        \vspace{-0.3cm}
    \begin{subfigure}[htbp]{\textwidth}
        \centering
\includegraphics[width=\textwidth,height=0.8\textheight,keepaspectratio]{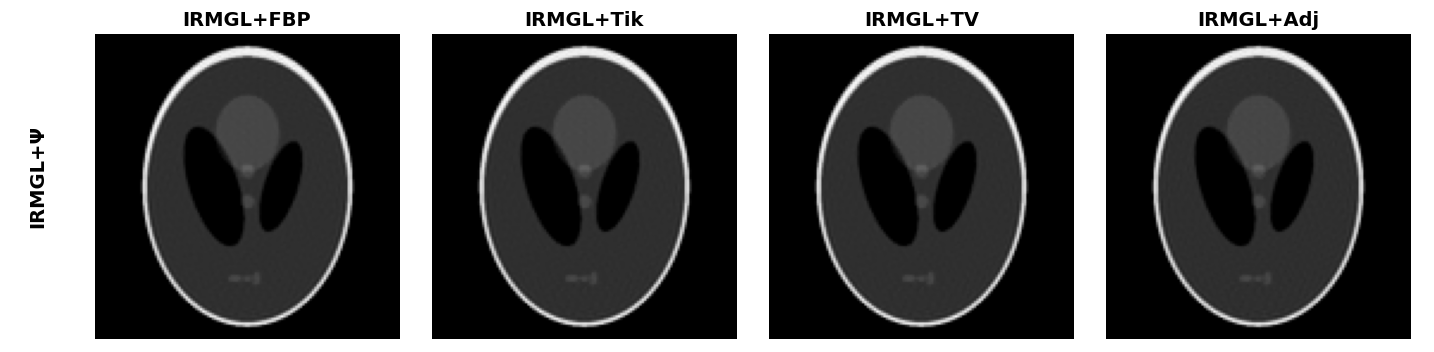}
        \caption{  Noise Level: 0.01}
    \end{subfigure}
    \caption{  Final reconstructions using \texttt{IRMGL+\(\Psi\)} with discrepancy stopping for different noise levels, initialized with various initial reconstructors $\Psi$.}
    \label{fig:combined_reconstructions}
\end{figure}

\begin{figure}[htbp]
    \centering
    \begin{subfigure}[b]{0.48\textwidth}
        \centering
        \includegraphics[width=\textwidth]{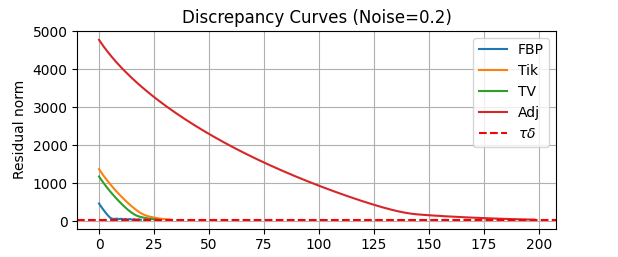}
        \label{fig:dis_0001}
    \end{subfigure}
    \hfill
    \begin{subfigure}[b]{0.48\textwidth}
        \centering
        \includegraphics[width=\textwidth]{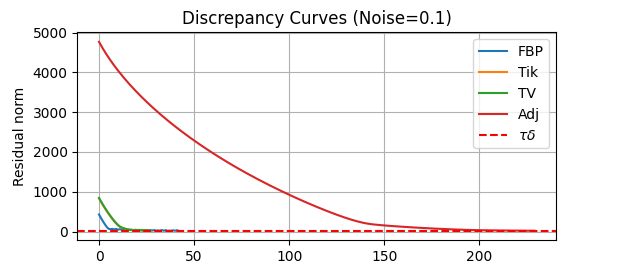}
        \label{fig:dis_0005}
    \end{subfigure}

    \vspace{3mm} 

    \begin{subfigure}[b]{0.48\textwidth}
        \centering
        \includegraphics[width=\textwidth]{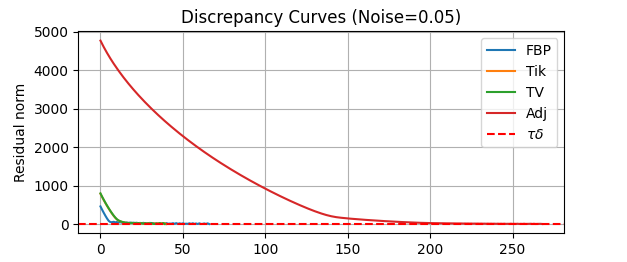}
        \label{fig:dis_001}
    \end{subfigure}
    \hfill
    \begin{subfigure}[b]{0.48\textwidth}
        \centering
        \includegraphics[width=\textwidth]{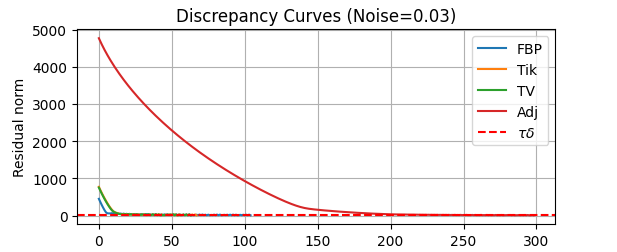}
        \label{fig:dis_005}
    \end{subfigure}

    \vspace{3mm} 

    \begin{subfigure}[b]{0.6\textwidth}
        \centering
        \includegraphics[width=\textwidth]{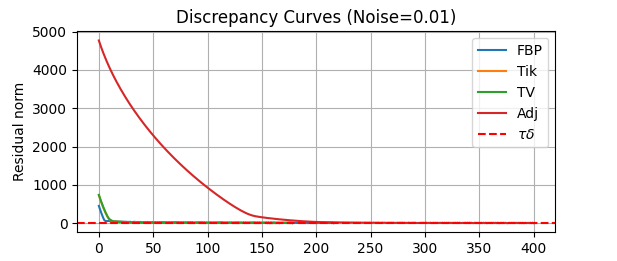}
        \label{fig:dis_01}
    \end{subfigure}
    \caption{Discrepancy curves for different noise levels $\delta_{rel}$.}
    \label{fig:combined_discrepancy}
\end{figure}

\begin{table}
    \centering
    \caption{Numerical performance of \texttt{IRMGL+$\Psi$} for different noise levels $\delta_{rel}$.}
    \label{tab:Discrepancy_table}
    \renewcommand{\arraystretch}{1.2}
    \setlength{\tabcolsep}{6pt}
    \begin{tabular}{c|lccccc}
        \toprule
        $\boldsymbol{\delta_{rel}}$ & \textbf{$\Psi$} & \textbf{Iterations} & \textbf{RE} & \textbf{Discrepancy} & \textbf{PSNR} & \textbf{SSIM} \\
        \midrule

        \multirow{5}{*}{0.2}
           & \texttt{FBP}  & 19 & 0.2142 & 39.99 & 26.03 & 0.9324 \\
            & \texttt{Tik}  & 33 & 0.1729 & 41.07 & 27.90 & 0.7871 \\
            & \texttt{TV}   & 27 & 0.1684 & 41.68  & 28.12 & 0.7893 \\
            & \texttt{Adj} & 198 & 0.2170 & 41.70 & 25.92  & 0.9298 \\
        \midrule
        \multirow{5}{*}{0.1}
            & \texttt{FBP} & 42 & 0.1385 & 19.22 & 29.82 & 0.9634 \\
            & \texttt{Tik}  & 27 & 0.1203 & 18.92 & 31.04 & 0.9217 \\
            & \texttt{TV}   & 27 & 0.1208 & 19.25  & 31.01 & 0.9118 \\
            
            &\texttt{Adj} & 229 & 0.1445 & 20.40 & 29.45  & 0.9618 \\
        \midrule

        \multirow{5}{*}{0.05}
          & \texttt{FBP}         & 66 & 0.0967 & 9.99 & 32.95 & 0.9782 \\
            & \texttt{Tik}   & 40 & 0.0927 & 10.24 & 33.31 & 0.9689 \\
            & \texttt{TV}  & 39 & 0.0918 & 9.98  & 33.39 & 0.9718 \\
            & \texttt{Adj} & 268 & 0.0973 & 10.29 & 32.89  & 0.9777 \\
        \midrule
         \multirow{5}{*}{0.03}
          & \texttt{FBP}       & 104 & 0.0742 & 5.89 & 35.24 & 0.9852 \\
            & \texttt{Tik}   & 67 & 0.0703 & 5.84 & 35.71 & 0.9822 \\
            & \texttt{TV}   & 63 & 0.0722 & 6.03  & 35.48 & 0.9828 \\
            & \texttt{Adj} & 298 & 0.0769 & 6.21 & 34.93  & 0.9837 \\
        \midrule
        \multirow{5}{*}{0.01}
           & \texttt{FBP}        & 216 & 0.0479 & 2.01 & 39.10 & 0.9920 \\
            & \texttt{Tik}   & 166 & 0.0460 & 2.07 & 39.38 & 0.9921 \\
            & \texttt{TV}  & 178 & 0.0455 & 1.87  & 39.48 & 0.9926 \\
            & \texttt{Adj} & 400 & 0.0495 & 2.02 & 38.75  & 0.9899 \\
        \bottomrule
    \end{tabular}
\end{table}
In the context of this model problem, Algorithm~\ref{alg:buildtree} is executed using noisy datasets at various noise levels. The resulting computational performance, including the required number of iterations and the achieved residual values, is summarized in \Cref{tab:Discrepancy_table}. These findings suggest that all four initial reconstructors (FBP, Adj, Tik and TV), when integrated with the \texttt{IRMGL+\(\Psi\)} method, are capable of producing approximate solutions of comparable quality. To visually demonstrate the behavior of the discrepancy principle~\eqref{eqn: discrepancy} as a termination rule for the proposed method, Fig.~\ref{fig:combined_discrepancy} displays the evolution of the residual \(\|Au_{k_\delta}^\delta - v^\delta\|\) with respect to the iteration count \(k\). This analysis is conducted for data with relative noise levels of \(\delta_{rel} = 0.2, 0.1, 0.05, 0.03\), and \(0.01\), with iterations concluding upon satisfaction of the discrepancy principle or reaching a maximum of \(1000\) iterations.

\noindent As depicted in Fig.~\ref{fig:combined_reconstructions}, we present the reconstructed images obtained at various noise levels for each of the four initial regularizers $\Psi$. It is worth noting that although the proposed \texttt{IRMGL+$\Psi$} method exhibits comparable performance and rapid convergence across all choices of initial reconstructor $\Psi$, the initialization with $\Psi = A^*$ generally yields superior reconstruction quality, albeit requiring a larger number of iterations. This behavior can be attributed to the inherent nature of the other initializers (FBP, Tik, and TV) which already incorporate regularization. Consequently, when these reconstructors are used as starting points, the proposed iterative scheme achieves satisfactory results within fewer iterations.

\subsubsection{Comparison results}
For the model problem described above, we implement the \texttt{GraphLa+$\Psi$} method \cref{GRaphLA}, the \texttt{it-GraphLa+$\Psi$} method \cref{itGraphLa}, and our proposed approach under the same experimental setup, with a noise level of $0.02$.  To address the $\ell^2$–$\ell^1$ minimization subproblem that arises in both \cref{GRaphLA} and \cref{itGraphLa}, we employ the majorization–minimization framework in conjunction with a generalized Krylov subspace approach, as proposed in \cite{Lanza2015} and suggested in \cite{bianchi2024improved, bianchi2025data}. For the  method~\cref{itGraphLa}, we perform $k = 20$ outer iterations. The corresponding computational results are presented in \cref{tab:coule_results} and the reconstructed solutions for each methods is shown in Fig.~\ref{fig:comparison}.
\begin{table}
\centering
\caption{Quality comparison on Shepp–Logan phantom.}
\label{tab:coule_results}
\begin{tabular}{lcccc}
\hline
\textbf{Method} &\textbf{Iterations}  & \textbf{RE} & \textbf{SSIM} & \textbf{PSNR} \\
\hline
\texttt{FBP} & & 0.0338 & 0.8988 & 29.43 \\
\texttt{Tik} & & 0.0932 & 0.5233 & 20.50 \\
\texttt{TV}  &  &0.0292 & 0.2995 & 10.68\\
\hline
\multicolumn{4}{l}{\texttt{GraphLa+$\Psi$}} \\
\hline
\texttt{GraphLa+FBP} & & 0.0058 & 0.9880 & 44.75 \\
\texttt{GraphLa+Tik} & & 0.0101 & 0.9570 & 36.35 \\
\texttt{GraphLa+TV}  & & 0.0092 & 0.9534 & 38.70 \\
\texttt{GraphLa+gt} &  & 0.0027  & 0.9974 & 51.52 \\
\hline
\multicolumn{4}{l}{\texttt{it-GraphLa+$\Psi$}} \\
\hline
\texttt{it-GraphLa+FBP} & & 0.0059 & 0.9976 & 44.57 \\
\texttt{it-GraphLa+Tik} & & 0.0180 & 0.9836 & 39.96 \\
\texttt{it-GraphLa+TV}  & & 0.0177 & 0.9843 & 35.03 \\
\hline
\multicolumn{4}{l}{\texttt{IRMGL+$\Psi$}} \\
\hline
\texttt{IRMGL+FBP} & 138 & 0.0602 & 0.9884 & 37.04 \\
\texttt{IRMGL+Tik} & 84 & 0.0607 & 0.9871 & 37.26 \\
\texttt{IRMGL+TV}  & 91 & 0.0630 & 0.9866 & 36.65 \\
\texttt{IRMGL+Adj} & 334 & 0.0634 & 0.9868 & 36.61 \\
\hline

\end{tabular}
\end{table}
\begin{figure}
    \centering
    \includegraphics[width=0.98\textwidth]{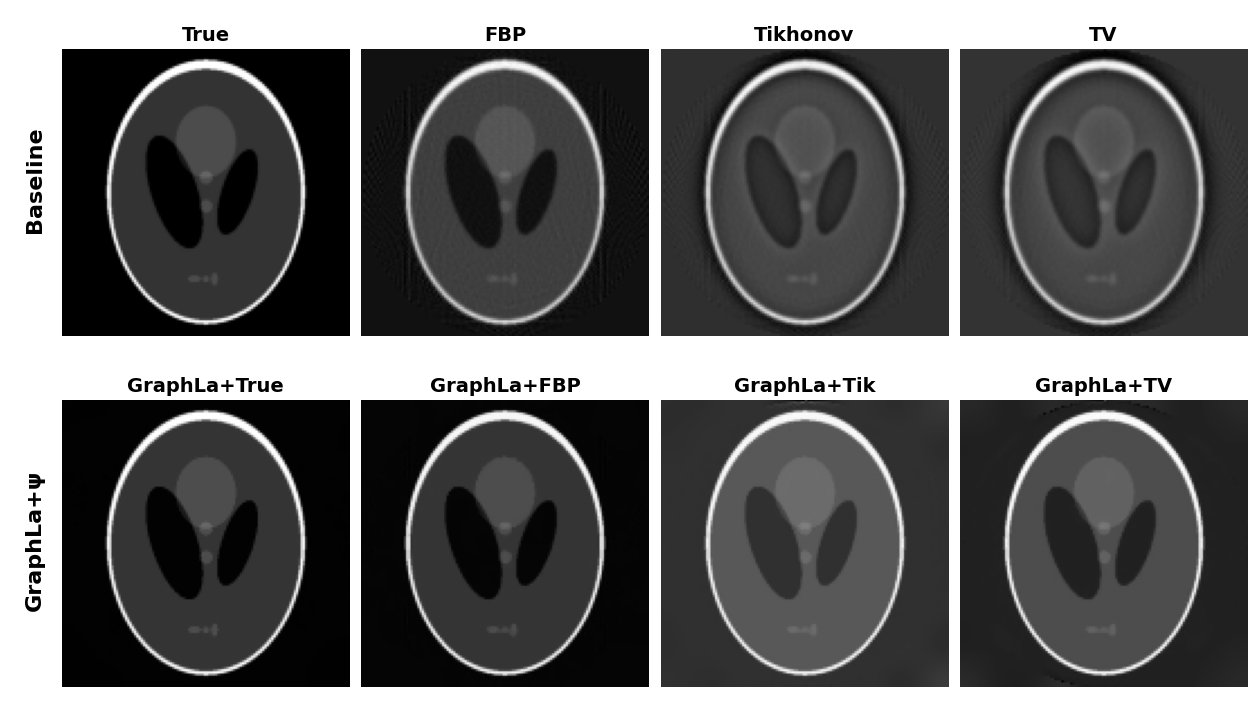}
 \vspace{-0.2cm}
 \includegraphics[width=0.98\textwidth]{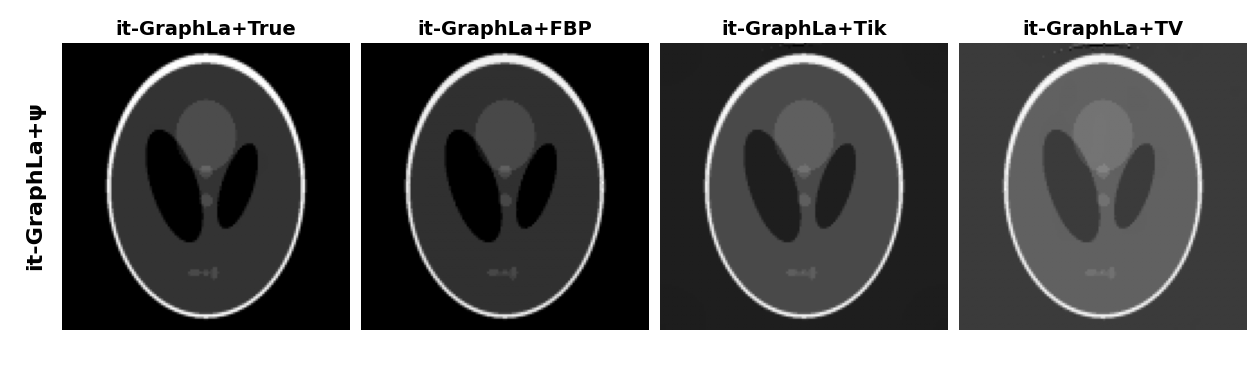}
    \includegraphics[width=0.98\textwidth]{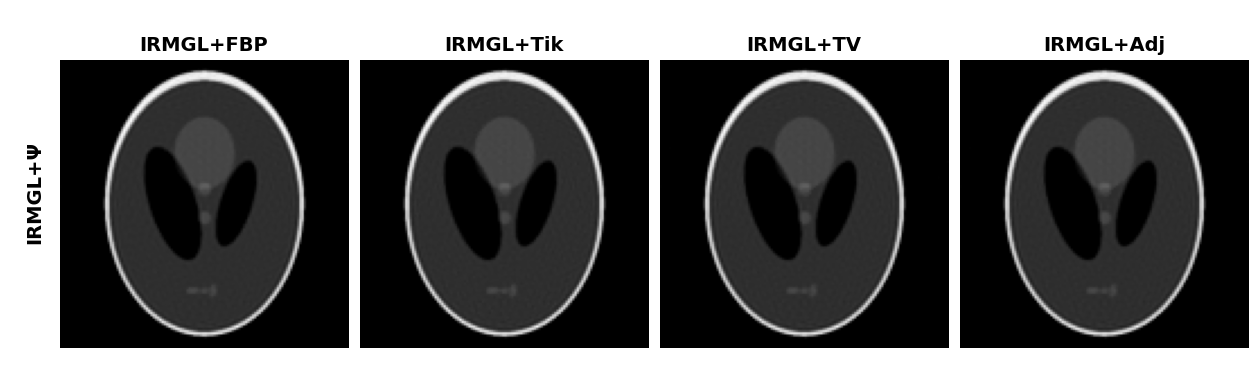}
    \caption{Visual comparison of reconstructed images.}
    \label{fig:comparison}
\end{figure}

In the following subsection, we examine an additional example to further evaluate the performance of the proposed scheme.



\subsection{Image deblurring}

Consider the image deblurring problem \cite{vogel2002computational}, where the forward operator 
$$\mathcal{A}: L^2(\Omega) \to L^2(\Omega), \quad \Omega \subset \mathbb{R}^2,$$
is defined as a spatial convolution with a Gaussian point spread function. Specially, for $u \in L^2 (\Omega)$, the action $\mathcal{A}$ is given by
\[
(\mathcal{A}u)(x, y) = \int_{\Omega} G_\rho(x - x', y - y') \cdot u(x', y') \, dx' \, dy',
\]
where the Gaussian kernel $G_\rho$ is defined as
\[
G_\rho(x, y) = \frac{1}{2\pi \rho^2} \exp\left(-\frac{x^2 + y^2}{2\rho^2}\right),
\]
with $\rho > 0$  controlling the spread (or blurring strength) of the kernel. 

\noindent In the experimental setup, the domain $\Omega$ is discretized into a \( 256 \times 256 \) two-dimensional grid of pixels.  The continuous forward operator \( \mathcal{A} \) is approximated by a discrete convolution operator, denoted by
\[
A : \mathbb{R}^{256 \times 256} \to \mathbb{R}^{256 \times 256}.
\]
Specifically, the action of \( A \) on a discrete image \( u \in \mathbb{R}^{256 \times 256} \) is given by
\[
(Au)_{i,j} = \sum_{k,\ell} G_\rho(i - k, j - \ell) \cdot u_{k,\ell}, \quad i, j, k, \ell \in \{0, \ldots, 255\},
\]
 In practice, this convolution is implemented using \texttt{scipy.ndimage.gaussian\_filter} function in Python.
Let \( u^\dagger \in \mathbb{R}^{256 \times 256} \) denote the true (unknown) image. The observed data \( v^\delta \in \mathbb{R}^{256 \times 256} \) is then modeled as
\[
v^\delta = A u^\dagger + \eta, \qquad \|\eta\| \leq \delta,
\]
where \( \eta \in \mathbb{R}^{256 \times 256} \) represents additive measurement noise, and \( \delta > 0 \) is the known noise level.

\noindent To reconstruct the true image \( u^\dagger \) from the noisy measurement \( v^\delta \), we apply \Cref{alg:buildtree}. In this experiment, we set \( \rho = 1.5 \), determining the severity of the blurring in the forward model. All other experimental parameters are consistent with those specified in~\Cref{experimental setup}. The effectiveness of the proposed method~\eqref{main iterative schrme} under varying noise levels is illustrated in Fig. \ref{fig:ID 0.005}, Fig. \ref{fig:ID 0.001}, and Fig. \ref{fig:ID 0.0005}. Additional quantitative results are provided in~\Cref{tab:experiment table for ID}, which summarizes the reconstruction performance across different noise levels, using the initial guess \( u_0^\delta = A^*(v^\delta) \). The table reports the stopping index \( k_\delta \), the relative error 
along with the corresponding residual norms, PSNR, and SSIM, offering a comprehensive evaluation of the reconstruction quality.
\begin{figure}[htbp]
    \centering
    \includegraphics[width=0.8\textwidth]{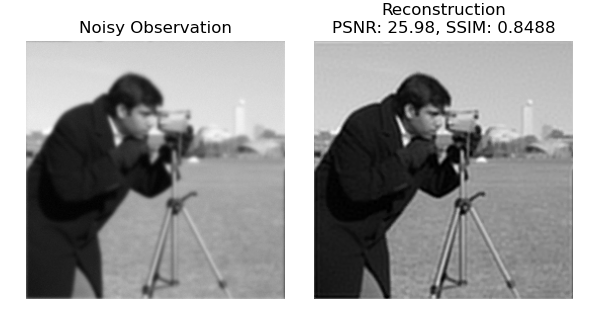}
    \caption{Reconstructed image with $\delta = 0.005$.}
    \label{fig:ID 0.005}
\end{figure}
\begin{figure}[htbp]
    \centering
    \includegraphics[width=0.8\textwidth]{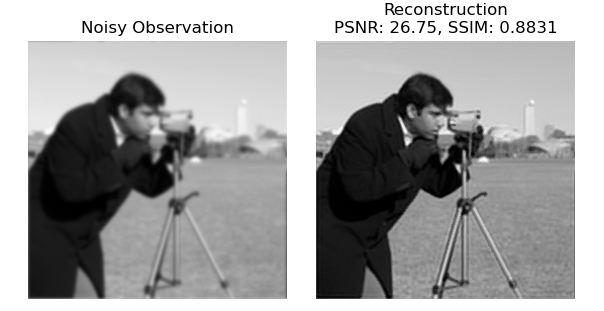}
    \caption{Reconstructed image with $\delta = 0.001$.}
    \label{fig:ID 0.001}
\end{figure}
\begin{figure}[htbp]
    \centering
    \includegraphics[width=0.8\textwidth]{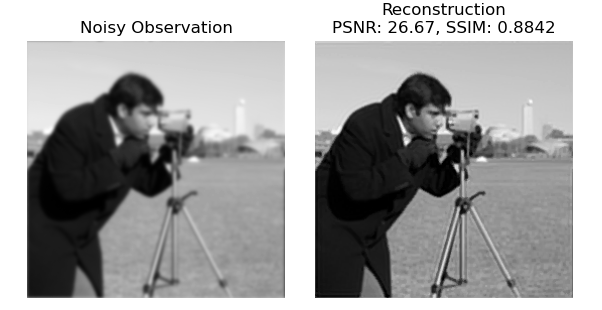}
    \caption{Reconstructed image with $\delta = 0.0005$.}
    \label{fig:ID 0.0005}
\end{figure}

\begin{table}[htbp]
\centering
\caption{Numerical results of~\Cref{alg:buildtree} for image deblurring with initialization \( u_0^\delta = A^*(v^\delta) \).
}
\begin{tabular}{llcccc}
\toprule
$\delta_{rel}$ & \textbf{Iterations} & \textbf{Residual} & \textbf{RE} & \textbf{PSNR} & \textbf{SSIM} \\
\midrule
0.005 & 36         &  1.4060    & 0.0802              & 25.98 & 0.8488 \\
0.003 & 106     & 0.8450        & 0.0821         & 26.46  & 0.8689  \\
0.001 & 200        & 0.4167  & 0.0812          & 26.75 & 0.8831  \\
0.0005 &  287        & 0.3597              & 0.0801  & 26.67  & 0.8842\\
\bottomrule
\end{tabular}
\label{tab:experiment table for ID}
\end{table}
\subsection{Computational Cost and Scalability}
We compare the computational complexity of the  \texttt{IRMGL+$\Psi$} scheme with the variational framework \texttt{graphLa+$\Psi$}. 
Let \(q\) denote the number of image unknowns (pixels) and \(|S|\) the number of edges in the image graph, satisfying \(|S| = \mathcal{O}(qd)\) for an average node degree \(d\). 
Assuming the forward and adjoint evaluations of the operator \(A\) cost \(\mathcal{O}(\mathcal{C}_A)\) and the construction of the graph Laplacian from an image \(u\) costs \(\mathcal{O}(\mathcal{C}_{\mathrm{build}})\), one iteration of \texttt{IRMGL+$\Psi$} requires
\[
\mathcal{C}_{\mathrm{iter}}^{(\mathrm{IRMGL})} 
= 2\mathcal{C}_A + \mathcal{O}(|S|) + \frac{\mathcal{C}_{\mathrm{build}}}{p},
\]
where \(p\) denotes the graph update frequency (\(p=1\) if the graph is rebuilt at every step and \(p=\infty\) if fixed). 
As both \(\mathcal{C}_A\) and \(|S|\) scale linearly with \(q\) for structured operators, the overall complexity per iteration is \(\mathcal{O}(q)\). 
All core operations (matrix vector multiplications with sparse matrices) are highly parallelizable, ensuring good scalability and low memory overhead.

In contrast, the \texttt{graphLa+$\Psi$} approach minimizes a non-smooth convex functional involving an \(\ell_1\)-type penalty. 
Standard optimization solvers such as ADMM or primal--dual hybrid gradient require \(t_{\mathrm{outer}}\) outer and \(r\) inner iterations, leading to a total cost
\[
\mathcal{C}_{\mathrm{solve}}^{(\mathrm{var})}
\simeq t_{\mathrm{outer}}\, r\, (\mathcal{C}_A + \mathcal{O}(|S|)).
\]
Even for moderate iteration counts (\(t_{\mathrm{outer}}\approx 100\), \(r\approx 50\)), this cost exceeds that of the iterative method by two to three orders of magnitude.  

In summary, while the variational model may yield highly accurate reconstructions, it entails substantial computational effort. 
The proposed \texttt{IRMGL+$\Psi$} method, with its linear per-iteration complexity, efficient parallel implementation, and tunable graph update frequency, offers a significantly more scalable and memory-efficient alternative for large-scale and high-resolution imaging problems.

\section{Concluding remarks}
\label{sec:conclusions}
This work explores the use of a graph Laplacian operator, built from an initial approximation of the solution obtained via a reconstructor \( \Psi \), as a regularizer in iterative regularization methods for solving linearly ill-posed problems. In this method, we build a new graph Laplacian at each step, based on the previous approximation. Under certain moderate assumptions on the forward operator $A$ and generic assumptions on \( \Psi \), we showed that the resulting method is a stable and convergent regularization method. As a stopping rule for the approach, we examined the discrepancy principle. Irrespective of whatever reconstructor  \( \Psi \) was used initially, \texttt{IRMGL+\(\Psi\)} greatly enhanced the quality of the reconstructions in all of the numerical experiments that were presented.

  \noindent  There are numerous open research questions. Since this is the initial attempt to incorporate the graph Laplacian into the iterative regularization method, additional approaches such as Gauss-Newton~\cite{qi2000iteratively}, adaptive heavy ball~\cite{jin2024adaptive}, and stochastic gradient descent methods~\cite{jahn2020discrepancy,jin2020convergence, jin2023convergence} may also be investigated. Furthermore, as a preliminary reconstruction, architectures based on deep neural networks (DNNs) can be examined. 
%
\section*{Acknowledgments}
We sincerely thank the anonymous referees for their meticulous review and constructive feedback, which substantially improved the clarity and contribution of this paper.
\section*{Data Availability}
The datasets and source code developed in the course of this research are available from the authors upon reasonable request.


\begin{thebibliography}{99}
\bibitem{jonasadler20181442734}
{\sc J.~Adler et al.}, 
{\em odlgroup/odl: ODL 0.7.0}, 
Zenodo, 2018, doi:10.5281/zenodo.1442734.

\bibitem{arias2009variational}
{\sc P.~Arias, V.~Caselles, and G.~Sapiro}, {\em A variational framework for non-local image inpainting}, in International Workshop on Energy Minimization Methods in Computer Vision and Pattern Recognition, Springer, 2009, pp.~345--358.

\bibitem{arridge2019solving}
{\sc S.~Arridge, P.~Maass, O.~Öktem, and C.-B.~Schönlieb}, {\em Solving inverse problems using data-driven models}, Acta Numer., 28 (2019), pp.~1--174.

\bibitem{aspri2020data}
{\sc A.~Aspri, S.~Banert, O.~Öktem, and O.~Scherzer}, {\em A data-driven iteratively regularized Landweber iteration}, Numer. Funct. Anal. Optim., 41 (2020), pp.~1190--1227.

\bibitem{aspri2020data1}
{\sc A.~Aspri, Y.~Korolev, and O.~Scherzer}, {\em Data driven regularization by projection}, Inverse Problems, 36 (2020), 125009.

\bibitem{bajpai2024hanke}
{\sc H.~Bajpai, G.~Mittal, and A.~K.~Giri}, {\em Hanke-Raus heuristic rule for iteratively regularized stochastic gradient descent}, arXiv preprint arXiv:2412.02397, (2024).

\bibitem{bajpai2025stochastic}
{\sc H.~Bajpai, G.~Mittal, and A.~K.~Giri}, {\em Stochastic data-driven Bouligand--Landweber method for solving non-smooth inverse problems}, J. Inverse Ill-Posed Probl., 33 (2025), pp.~153--170.

\bibitem{barbano2023image}
{\sc R.~Barbano, J.~Antorán, J.~Leuschner, J.~M.~Hernández-Lobato, B.~Jin, and Ž.~Kereta}, {\em Image reconstruction via deep image prior subspaces}, arXiv preprint arXiv:2302.10279, (2023).

\bibitem{Bauschke2011}{\sc H.~H.~Bauschke and P.~L.~Combettes}, {\em Convex Analysis and Monotone Operator Theory in Hilbert Spaces}, CMS Books in Mathematics/Ouvrages de Mathématiques de la SMC, Springer, New York, 2011.

\bibitem{bianchi2021graph}
{\sc D.~Bianchi, A.~Buccini, M.~Donatelli, E.~Randazzo, et al.}, {\em Graph Laplacian for Image Deblurring}, Electron. Trans. Numer. Anal., 55 (2021), pp.~169--186.


\bibitem{bianchi2023graph}
{\sc D.~Bianchi, M.~Donatelli, D.~Evangelista, W.~Li, and E.~L.~Piccolomini}, {\em Graph Laplacian and neural networks for inverse problems in imaging: Graphlanet}, in Int. Conf. on Scale Space and Variational Methods in Computer Vision, Springer, 2023, pp.~175--186.

\bibitem{bianchi2024improved}
{\sc D.~Bianchi, F.~Bossmann, W.~Wang, and M.~Liu}, {\em Improved impedance inversion by deep learning and iterated graph Laplacian}, arXiv preprint arXiv:2404.16324, (2024).

\bibitem{bianchi2025data}
{\sc D.~Bianchi, D.~Evangelista, S.~Aleotti, M.~Donatelli, E.~L.~Piccolomini, and W.~Li}, {\em A data-dependent regularization method based on the graph Laplacian}, SIAM J. Sci. Comput., 47 (2025), pp.~C369--C398.

\bibitem{bronstein2017geometric}
{\sc M.~M. Bronstein, J.~Bruna, Y.~LeCun, A.~Szlam, and P.~Vandergheynst}, 
{\em Geometric deep learning: going beyond euclidean data}, 
IEEE Signal Process. Mag., 34 (2017), pp.~18--42.

\bibitem{buccini2021graph}
{\sc A.~Buccini and M.~Donatelli}, 
{\em Graph Laplacian in $l_2 - l_q$ regularization for image reconstruction}, 
in 2021 Int. Conf. Comput. Sci. Appl. (ICCSA), IEEE, 2021, pp.~29--38.

\bibitem{Burger2004}
{\sc M.~Burger and S.~Osher}, 
{\em Convergence rates of convex variational regularization}, Inverse problems, 20 (2004), 1411.

\bibitem{calatroni2017graph}
{\sc L.~Calatroni, Y.~van Gennip, C.-B.~Schönlieb, H.~M. Rowland, and A.~Flenner}, 
{\em Graph clustering, variational image segmentation methods and Hough transform scale detection for object measurement in images}, 
J. Math. Imaging Vis., 57 (2017), pp.~269--291.

\bibitem{engl1996regularization}
{\sc H.~W. Engl, M.~Hanke, and A.~Neubauer}, 
{\em Regularization of Inverse Problems}, 
vol.~375, Springer, 1996.

\bibitem{evangelista2023rising}
{\sc D.~Evangelista, E.~Morotti, and E.~L. Piccolomini}, 
{\em RISING: A new framework for model-based few-view CT image reconstruction with deep learning}, 
Comput. Med. Imaging Graph., 103 (2023), 102156.

\bibitem{gilboa2007nonlocal}
{\sc G.~Gilboa and S.~Osher}, 
{\em Nonlocal linear image regularization and supervised segmentation}, 
Multiscale Model. Simul., 6 (2007), pp.~595--630.

\bibitem{gilboa2009nonlocal}
{\sc G.~Gilboa and S.~Osher}, 
{\em Nonlocal operators with applications to image processing}, 
Multiscale Model. Simul., 7 (2009), pp.~1005--1028.

\bibitem{Gonzalez2019}
{\sc M.~Gonzalez, A.~Almansa, M.~Delbracio, P.~Musé, and P.~Tan},
{\em Solving inverse problems by joint posterior maximization with a VAE prior},
SIAM J. Imaging Sci., 15 (2022), pp.~822-859.

\bibitem{hanke1995convergence}
{\sc M.~Hanke, A.~Neubauer, and O.~Scherzer}, 
{\em A convergence analysis of the Landweber iteration for nonlinear ill-posed problems}, 
Numer. Math., 72 (1995), pp.~21--37.

\bibitem{hansen2006deblurring}
{\sc P.~C. Hansen, J.~G. Nagy, and D.~P. O'Leary}, 
{\em Deblurring Images: Matrices, Spectra, and Filtering}, 
SIAM, 2006.

\bibitem{jain1989fundamentals}
{\sc A.~K. Jain}, 
{\em Fundamentals of Digital Image Processing}, 
Prentice-Hall, 1989.

\bibitem{jahn2020discrepancy}
{\sc T.~Jahn and B.~Jin}, 
{\em On the discrepancy principle for stochastic gradient descent}, 
Inverse Probl., 36 (2020), 095009.

\bibitem{jahn2024early}
{\sc T.~Jahn and B.~Jin}, 
{\em Early stopping of untrained convolutional neural networks}, 
SIAM J. Imaging Sci., 17 (2024), pp.~2331--2361.

\bibitem{jin2020convergence}
{\sc B.~Jin, Z.~Zhou, and J.~Zou}, 
{\em On the convergence of stochastic gradient descent for nonlinear ill-posed problems}, 
SIAM J. Optim., 30 (2020), pp.~1421--1450.

\bibitem{jin2023convergence}
{\sc B.~Jin and Ž.~Kereta}, 
{\em On the convergence of stochastic gradient descent for linear inverse problems in Banach spaces}, 
SIAM J. Imaging Sci., 16 (2023), pp.~671--705.

\bibitem{jin2024adaptive}
{\sc Q.~Jin and Q.~Huang}, 
{\em An adaptive heavy ball method for ill-posed inverse problems}, 
SIAM J. Imaging Sci., 17 (2024), pp.~2212--2241.

\bibitem{qi2000iteratively}
{\sc Q.~Jin}, 
{\em On the iteratively regularized Gauss-Newton method for solving nonlinear ill-posed problems}, 
Math. Comp., 69 (2000), pp.~1603--1623.


\bibitem{kaltenbacher2008iterative}
{\sc B.~Kaltenbacher, A.~Neubauer, and O.~Scherzer}, 
{\em Iterative Regularization Methods for Nonlinear Ill-posed Problems}, 
Walter de Gruyter, 2008.


\bibitem{kak2001principles}
{\sc A.~C. Kak and M.~Slaney}, 
{\em Principles of Computerized Tomographic Imaging}, 
SIAM, 2001.

\bibitem{keller2021graphs}
{\sc M.~Keller, D.~Lenz, and R.~K. Wojciechowski}, 
{\em Graphs and Discrete Dirichlet Spaces}, 
vol.~358, Springer, 2021.

\bibitem{Lanza2015}
{\sc A.~Lanza, S.~Morigi, L.~Reichel, and F.~Sgallari}, {\em A generalized Krylov subspace method for $\ell_p-\ell_q$ minimization},
SIAM J. Sci. Comput., 37 (2015), S30-S50.

\bibitem{Li2020}
{\sc H.~Li, J.~Schwab, S.~Antholzer, and M.~Haltmeier},
{\em NETT: Solving inverse problems with deep neural networks},
 Inverse Problems, 36 (2020), 065005.

\bibitem{lou2010image}
{\sc Y.~Lou, X.~Zhang, S.~Osher, and A.~Bertozzi}, 
{\em Image recovery via nonlocal operators}, 
J. Sci. Comput., 42 (2010), pp.~185--197.

\bibitem{Lunz2018}
{\sc S.~Lunz, O.~Öktem, and C.~B.~Schönlieb}, 
{\em Adversarial regularizers in inverse problems},
Advances in Neural Information Processing Systems (NeurIPS), vol.~31, 2018.

\bibitem{Meinhardt2017}
{\sc T.~Meinhardt, M.~Möller, C.~Hazirbas, and D.~Cremers}, 
{\em Learning proximal operators: Using denoising networks for regularizing inverse imaging problems},
Proceedings of the IEEE International Conference on Computer Vision (ICCV), 2017, pp.~1781--1790.


\bibitem{mittal2025convergence}
{\sc G.~Mittal, H.~Bajpai, and A.~K. Giri}, 
{\em Convergence analysis of iteratively regularized Landweber iteration with uniformly convex constraints in Banach spaces}, 
J. Complexity, 86 (2025), 101897.

\bibitem{morozov1966solution}
{\sc V.~Morozov}, 
{\em On the solution of functional equations by the method of regularization}, 
Dokl. Akad. Nauk SSSR, 167 (1966), pp.~510.

\bibitem{natterer2001mathematics}
{\sc F.~Natterer}, 
{\em The Mathematics of Computerized Tomography}, 
SIAM, 2001.

\bibitem{ng1999fast}
{\sc M.~K. Ng, R.~H. Chan, and W.-C. Tang}, 
{\em A fast algorithm for deblurring models with Neumann boundary conditions}, 
SIAM J. Sci. Comput., 21 (1999), pp.~851--866.

\bibitem{Obmann2020}
{\sc D.~Obmann, L.~Nguyen, J.~Schwab, and M.~Haltmeier},
{\em Sparse Anett for solving inverse problems with deep learning},
 Proc. IEEE 17th Int. Symp. on Biomedical Imaging Workshops (ISBI Workshops), IEEE, (2020), pp.~1--4.

\bibitem{peyre2008non}
{\sc G.~Peyré, S.~Bougleux, and L.~Cohen}, 
{\em Non-local regularization of inverse problems}, 
in Computer Vision--ECCV 2008, Springer, 2008, pp.~57--68.

\bibitem{peyre2011non}
{\sc G.~Peyré, S.~Bougleux, and L.~D. Cohen}, 
{\em Non-local regularization of inverse problems}, 
Inverse Probl. Imaging, 5 (2011), pp.~511--530.



\bibitem{Romano2017}
{\sc Y.~Romano, M.~Elad, and P.~Milanfar},
{\em The little engine that could: Regularization by denoising (RED)},
 SIAM J. Imaging Sci., 10 (2017), pp.~1804--1844.


\bibitem{ronneberger2015u}
{\sc O.~Ronneberger, P.~Fischer, and T.~Brox}, 
{\em U-net: Convolutional networks for biomedical image segmentation}, 
in MICCAI 2015, Springer, 2015, pp.~234--241.

\bibitem{scherzer1998modified}
{\sc O.~Scherzer}, 
{\em A modified Landweber iteration for solving parameter estimation problems}, 
Appl. Math. Optim., 38 (1998), pp.~45--68.

\bibitem{scherzer2009variational}
{\sc O.~Scherzer, M.~Grasmair, H.~Grossauer, M.~Haltmeier, and F.~Lenzen}, 
{\em Variational Methods in Imaging}, 
vol.~167, Springer, 2009.

\bibitem{vogel2002computational}
{\sc C.~R. Vogel}, 
{\em Computational Methods for Inverse Problems}, 
SIAM, 2002.

\bibitem{wang2004image}
{\sc Z.~Wang, A.~C. Bovik, H.~R. Sheikh, and E.~P. Simoncelli}, 
{\em Image quality assessment: from error visibility to structural similarity}, 
IEEE Trans. Image Process., 13 (2004), pp.~600--612.

\bibitem{wangearly}
{\sc H.~Wang, T.~Li, Z.~Zhuang, T.~Chen, H.~Liang, and J.~Sun}, 
{\em Early stopping for deep image prior}, 
Trans. Mach. Learn. Res., (2023).

\bibitem{zhang2010bregmanized}
{\sc X.~Zhang, M.~Burger, X.~Bresson, and S.~Osher}, 
{\em Bregmanized nonlocal regularization for deconvolution and sparse reconstruction}, 
SIAM J. Imaging Sci., 3 (2010), pp.~253--276.





\end{thebibliography}
\end{document}